\numberwithin{equation}{section}
\date{}
\newtheorem{theorem}{Theorem}
\newtheorem{lemma}{Lemma}[section]
\newtheorem{thm}[lemma]{Theorem}
\newtheorem{definition}[lemma]{Definition}
\newtheorem{corollary}[lemma]{Corollary}
\newtheorem{example}[lemma]{Example}
\newcommand{\End}{\rm{End}}
\newcommand{\id}{\rm{id}}
\newcommand{\dom}{\rm{domdim}}
\newcommand{\mo}{\rm{mod}}
\newcommand{\Ext}{\rm{Ext}}
\newcommand{\Hom}{\rm{Hom}}
\newcommand{\Gpd}{\rm{Gpd}}
\newcommand{\pd}{\rm{pd}}
\newcommand{\soc}{\rm{soc}}
\newcommand{\rad}{\rm{rad}}
\newcommand{\im}{\rm{Im}}
\newcommand{\gl}{\rm{gldim}}
\begin{document}
\title{\bf Gorenstein projective dimensions of modules over minimal Auslander-Gorenstein algebras }
\author{Shen Li, \quad Ren\'{e} Marczinzik, \quad Shunhua Zhang }
\date{}
\maketitle

\begin{center}\section*{}\end{center}

\begin{abstract}

In this article we investigate the relations between the Gorenstein projective dimensions of $\Lambda$-modules and their socles for minimal n-Auslander-Gorenstein algebras $\Lambda$ in the sense of Iyama and Solberg \cite{IS}. First we give a description of projective-injective $\Lambda$-modules in terms of their socles. Then we prove that a $\Lambda$-module $N$ has Gorenstein projective dimension at most n iff its socle has Gorenstein projective dimension at most n iff $N$ is cogenerated by a projective $\Lambda$-module. Furthermore, we show that minimal n-Auslander-Gorenstein algebras can be characterised by the relations between the Gorenstein projective dimensions of modules and their socles.
\end{abstract}

{\bf Key words and phrases:} minimal Auslander-Gorenstein algebras, higher Auslander algebras, Gorenstein projective dimension

\vskip0.2in

\section{Introduction}

An Artin algebra is called an Auslander algebra if its global dimension is at most 2 and its dominant dimension is at least 2. Auslander established a bijection between representation-finite algebras and Auslander algebras, given by $M \mapsto {\End}_{A}M $ where $A$ is a representation-finite Artin algebra and $M$ is an additive generator of $A$. In \cite{IY1}, Iyama introduced n-Auslander algebras. As a generalisation of Auslander algebras, n-Auslander algebras are characterised by having global dimension at most n+1 and dominant dimension at least n+1. There is a one-to-one correspondence between n-Auslander algebras and finite n-cluster tilting subcategories, see \cite{IY1} for details. This is known as the higher Auslander correspondence. $\tau$-selfinjective algebras were introduced by Auslander and Solberg in \cite{AS}. The endomorphism algebra of some suitable module over a $\tau$-selfinjective algebra satisfies that the injective dimension of the left (right) regular module is at most 2 and the dominant dimension is at least 2. Thus it can be considered as a generalisation of Auslander algebras. There also exists a one-to-one correspondence between $\tau$-selfinjective algebras and algebras with injective dimension of the left (right) regular module at most 2 and dominant dimension at least 2, see \cite{AS} and \cite{KF} for details.

In \cite{IS}, Iyama and Solberg gave a further generalisation and defined the minimal n-Auslander-Gorenstein algebras. Then they introduced the notion of n-precluster tilting subcategories and established a one-to-one correspondence between minimal n-Auslander-Gorenstein algebras and finite n-precluster tilting subcategories. The n-precluster tilting subcategories generalize and unify two seemingly different concepts, namely n-cluster tilting subcategories and $\tau$-selfinjective algebras. We refer to \cite{IS} for details. See also \cite{Mar} for another generalisation and \cite{CIM} and \cite{Mar2} for examples and applications of minimal Auslander-Gorenstein algebras.

\begin{definition}

An Artin algebra $\Lambda$ is called a minimal n-Auslander-Gorenstein algebra if it satisfies \id$ _{\Lambda}\Lambda\leq n+1 \leq $ \dom \,$\Lambda$.

\end{definition}

Here id and domdim denote the injective dimension and dominant dimension, respectively. The definition of minimal n-Auslander-Gorenstein algebras is left-right symmetric, that is $\Lambda$ is a minimal n-Auslander-Gorenstein algebra if and only if $\Lambda^{op}$ is a minimal n-Auslander-Gorenstein algebra. Thus a minimal n-Auslander-Gorenstein algebra $\Lambda$ is either a self-injective algebra or an (n+1)-Gorenstein algebra satisfying ${\id} _{\Lambda}\Lambda = n+1 = $ \dom \,$\Lambda$. Let $N$ be a finitely generated $\Lambda$-module. Then the Gorenstein projective dimension of $N$ is at most n+1 for any Gorenstein algebra of self-injective dimension n+1. The main aim of this paper is to investigate the relations between the Gorenstein projective dimensions of $N$ and its socle for minimal n-Auslander-Gorenstein algebras. First we calculate the Gorenstein projective dimensions of all simple $\Lambda$-modules by using n-precluster tilting subcategories.

Let $A$ be an Artin algebra. We denote by $A$-mod the category of finitely generated left modules. In \cite{IY2} the functors$$\tau_{n}=\tau \Omega^{n-1}_{A}:A\text{-}\underline{\mo}\longrightarrow A\text{-}\overline{\mo} \quad\text{and}\quad \tau_{n}^{-}=\tau^{-}\Omega^{-(n-1)}_{A}:A\text{-}\overline{\mo}\longrightarrow A\text{-}\underline{\mo}$$are defined as the n-Auslander-Reiten translations.

\begin{definition}{\rm \cite[Definition 3.2]{IS}}

A subcategory $\mathcal{C}$ of $A$-{\mo} is called an n-precluster tilting subcategory if it satisfies the following conditions:

$(1)$\,$\mathcal{C}$ is a generator-cogenerator for $A$-{\mo},

$(2)$\,$\tau_{n}(\mathcal{C})\subseteq\mathcal{C}$ and $\tau^{-}_{n}\mathcal{(C)}\subseteq\mathcal{C}$,

$(3)$\,${\Ext}^{i}_{A}(\mathcal{C},\mathcal{C})=0$ for $0<i<n$,

$(4)$\,$\mathcal{C}$ is a functorially finite subcategory of $A$-{\mo}.

\end{definition}

If moreover $\mathcal{C}$ admits an additive generator $M$, we say that $\mathcal{C}$ is a finite n-precluster tilting subcategory. The endomorphism algebra $\Lambda={\End_{A}}(M)^{op}$ of $M$ is a minimal n-Auslander-Gorenstein algebra and all minimal n-Auslander-Gorenstein algebras can be constructed in this way. In particular, $\Lambda$ is self-injective if and only if $M$ is projective.

Let $M=\oplus^{t}_{i=1}M_{i}$ with $M_{i}$ indecomposable be a basic additive generator of an n-precluster tilting subcategory $\mathcal{C}$ in $A$-{\mo} and $\Lambda={\End_{A}}(M)^{op}$. Let $S_{i}$ be the top of the indecomposable projective $\Lambda$-module ${\Hom}_{A}(M,M_{i})$. We show the following result.

\begin{theorem}{\rm (Theorem 3.1)}

The Gorenstein projective dimension of $S_{i}$ is at most n if $M_{i}$ is a projective $A$-module. Otherwise the Gorenstein projective dimension of $S_{i}$ is equal to n+1.

\end{theorem}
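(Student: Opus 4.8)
The plan is to reduce the whole statement to a single $\Ext$-computation. Since $\Lambda$ is a minimal $n$-Auslander-Gorenstein algebra it is either self-injective — in which case $M$ is projective, every $M_i$ is projective, and every $\Lambda$-module is Gorenstein projective, so the assertion is immediate — or it satisfies $\id_{\Lambda}\Lambda=n+1$. In the latter case $\Lambda$ is $(n+1)$-Gorenstein, so every finitely generated module has Gorenstein projective dimension at most $n+1$, and the standard characterization over Iwanaga-Gorenstein algebras gives $\Gpd N=\sup\{i:\Ext^{i}_{\Lambda}(N,\Lambda)\neq 0\}$. Because $\id_{\Lambda}\Lambda=n+1$ forces $\Ext^{i}_{\Lambda}(-,\Lambda)=0$ for $i>n+1$, this means $\Gpd S_i\le n$ iff $\Ext^{n+1}_{\Lambda}(S_i,\Lambda)=0$, and $\Gpd S_i=n+1$ otherwise. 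Thus the theorem is equivalent to showing that $\Ext^{n+1}_{\Lambda}(S_i,\Lambda)=0$ precisely when $M_i$ is a projective $A$-module.

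The main tools I would set up are the equivalence $F=\Hom_A(M,-)\colon\mathrm{add}\,M\xrightarrow{\sim}\mathrm{proj}\,\Lambda$ together with the natural contravariant isomorphism $\Hom_{\Lambda}(FX,\Lambda)\cong\Hom_A(X,M)$ for $X\in\mathrm{add}\,M$, which is just Yoneda applied to $\Lambda=\Hom_A(M,M)$. Since $F$ is an equivalence, the minimal projective resolution of $S_i$ lifts uniquely to a complex $\tilde C_\bullet$ in $\mathrm{add}\,M$, and the duality then identifies $\Ext^{j}_{\Lambda}(S_i,\Lambda)$ with the $j$-th cohomology of $\Hom_A(\tilde C_\bullet,M)$. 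The problem is therefore to produce $\tilde C_\bullet$ in degrees up to $n+1$ and to read off the top cohomology.

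For non-projective $M_i$ I would invoke the $n$-almost split sequence $0\to\tau_n M_i\to C_{n-1}\to\cdots\to C_0\xrightarrow{f_0} M_i\to 0$ supplied by the $n$-precluster tilting structure, all of whose terms lie in $\mathcal{C}=\mathrm{add}\,M$. Using $\Ext^{i}_A(\mathcal{C},\mathcal{C})=0$ for $0<i<n$ one checks that $F$ keeps this sequence exact, and since $f_0$ is a sink map one gets $\mathrm{im}(Ff_0)=\rad P_i$; hence $F$ produces the minimal projective resolution $0\to F\tau_n M_i\to FC_{n-1}\to\cdots\to FC_0\to P_i\to S_i\to 0$ of length $n+1$. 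Dualizing and applying the identification above, $\Ext^{n+1}_{\Lambda}(S_i,\Lambda)$ is the cokernel of $\Hom_A(C_{n-1},M)\to\Hom_A(\tau_n M_i,M)$ induced by the source map $g\colon\tau_n M_i\to C_{n-1}$. As $\tau_n M_i$ is indecomposable and a direct summand of $M$, the split monomorphism $\tau_n M_i\hookrightarrow M$ cannot factor through $g$ (a source map is never a split monomorphism), so this cokernel is nonzero and $\Gpd S_i=n+1$.

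The main obstacle is the projective case, where one must prove $\Ext^{n+1}_{\Lambda}(S_i,\Lambda)=0$, equivalently that $\Omega^{n}_{\Lambda}S_i$ is Gorenstein projective. Here $\tau_n M_i=0$, the $n$-almost split sequence degenerates, and the clean finite resolution above is unavailable; when $\Lambda$ has infinite global dimension the resolution of $S_i$ is genuinely infinite and $\Omega^{n}_{\Lambda}S_i$ need not be projective, only Gorenstein projective. My intended route is to establish a uniform description of $\Ext^{n+1}_{\Lambda}(S_i,\Lambda)$ as a functor of $\tau_n M_i$, matching the explicit cokernel of the non-projective case, so that $\tau_n M_i=0$ forces the vanishing. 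Concretely I would either splice the projective resolution of $S_i$ with a projective coresolution arising from the dominant dimension — using that the terms $I^{0},\dots,I^{n}$ of the minimal injective coresolution of $\Lambda$ are projective-injective and are matched up by the Nakayama functor — to exhibit a complete resolution around $\Omega^{n}_{\Lambda}S_i$, or else show directly that $\Hom_A(\tilde C_\bullet,M)$ is acyclic in degree $n+1$ precisely because the $\tau_n$-term is absent. Verifying this exactness, i.e.\ that the disappearance of the source-map obstruction makes the dual complex acyclic in top degree, is the step I expect to demand the most care.
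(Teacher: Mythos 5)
Your reduction via Lemma 2.1 to the single statement ``$\Ext^{n+1}_{\Lambda}(S_i,\Lambda)=0$ iff $M_i$ is projective'' is valid, and your treatment of the non-projective case is essentially the paper's argument: the paper also takes the $n$-fold almost split extension ending in $M_i$, applies ${\Hom}_A(M,-)$ to obtain a resolution of $S_i$ of length $n+1$, and shows ${\Ext}^{n+1}_{\Lambda}(S_i,\Lambda)\neq 0$ by proving that the leftmost map $\tau_n M_i\to C_{n-1}$ is not a section (its image under ${\Hom}_A(-,\tau_n M_i)$ lands in ${\rad}_A(\tau_n M_i,\tau_n M_i)$ by Theorem 2.7(2)). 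One inaccuracy there: by Theorem 2.7(1) the representing exact sequence has one middle term $Z_s$ lying only in $\mathcal{Z}(\mathcal{C})$, not in $\mathcal{C}=\mathrm{add}\,M$, so what you obtain is a Gorenstein projective resolution rather than the minimal projective resolution; the computation of the top $\Ext$ still goes through because ${\Hom}_A(M,Z_s)$ is Gorenstein projective and hence $\Ext^{>0}$-orthogonal to $\Lambda$, which is exactly the dimension-shifting step the paper makes explicit.

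The genuine gap is the projective case, which you explicitly leave open. Neither of your two sketched routes is carried out, and the hoped-for ``uniform description of ${\Ext}^{n+1}_{\Lambda}(S_i,\Lambda)$ as a functor of $\tau_n M_i$'' is not obviously available: when $M_i$ is projective there is no $n$-fold almost split extension to degenerate, and $\Omega^{n}_{\Lambda}S_i$ is in general only Gorenstein projective, so the vanishing cannot be read off a finite projective resolution. The paper's actual argument is different and concrete: for $M_i$ projective the inclusion ${\rad}M_i\to M_i$ is right almost split, so applying ${\Hom}_A(M,-)$ yields a short exact sequence $0\to{\Hom}_A(M,{\rad}M_i)\to{\Hom}_A(M,M_i)\to S_i\to 0$; then Lemma 2.5(1) (that is, \cite[Corollary 3.16]{IS}) provides a ${\Hom}_A(\mathcal{C},-)$-exact resolution of ${\rad}M_i$ of length $n-1$ with all terms in $\mathcal{C}$ except one in $\mathcal{Z}(\mathcal{C})$, and applying ${\Hom}_A(M,-)$ and splicing produces a finite Gorenstein projective resolution of $S_i$ of length $n$. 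This use of the $\mathcal{Z}(\mathcal{C})$-resolution of the radical is the missing idea; without it (or an equivalent construction of a complete resolution around $\Omega^{n}_{\Lambda}S_i$), your proof of the bound ${\Gpd}S_i\leq n$ is incomplete.
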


Note that the dominant dimension of a minimal n-Auslander-Gorenstein algebra $\Lambda$ is at least n+1 and there exist projective-injective $\Lambda$-modules. We denote by prinj($\Lambda$) the full subcategory of projective-injective $\Lambda$-modules and we denote by GP$^{\leq n}(\Lambda)$ the full subcategory of $\Lambda$-modules whose Gorenstein projective dimensions are at most n and by sub$\Lambda$ the full subcategory of $\Lambda$-modules which can be cogenerated by a finite direct sum of $\Lambda$.

We prove that a minimal n-Auslander-Gorenstein algebra $\Lambda$ can be characterised by the relations between the Gorenstein projective dimensions of $\Lambda$-modules and their socles. We also refer to \cite{NRTZ} and \cite{PS}  for other characterisations of minimal n-Auslander-Gorenstein algebras.

\begin{theorem}{\rm (Theorem 4.1)}

Let $\Lambda$ be a $(n+1)$-Gorenstein algebra. Then $\Lambda$ is a minimal n-Auslander-Gorenstein algebra if and only if it satisfies {\rm{prinj}}$(\Lambda)$=$\{I\in \Lambda{\text{-}\mo}\mid I$ is injective and ${\Gpd(\soc}I)\leq n\}$.

\end{theorem}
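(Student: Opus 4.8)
The plan is to prove the two implications separately. Throughout I will use three standard facts about an $(n+1)$-Gorenstein algebra $\Lambda$: every $\Lambda$-module has Gorenstein projective dimension at most $n+1$; consequently $\Gpd(N)\le n$ precisely when $\Ext^{n+1}_{\Lambda}(N,\Lambda)=0$, and in particular $\Ext^{n+2}_{\Lambda}(-,\Lambda)=0$; and any submodule of a projective module is, up to a projective summand, a first syzygy, hence has Gorenstein projective dimension at most $n$. I will also use that in a minimal injective coresolution $0\to\Lambda\to I^{0}\to I^{1}\to\cdots$ the term $I^{j}$ is the injective envelope of the cosyzygy $\Omega^{-j}\Lambda$, so $\soc I^{j}=\soc\Omega^{-j}\Lambda$, and that $\dom\Lambda\ge n+1$ is equivalent to $I^{0},\dots,I^{n}$ being projective.

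For the forward implication assume $\Lambda$ is minimal $n$-Auslander-Gorenstein. If $I$ is projective-injective then $\soc I$ is a submodule of the projective module $I$, so $\Gpd(\soc I)\le n$ by the syzygy fact; this gives $\mathrm{prinj}(\Lambda)\subseteq\{I\ \text{injective}\mid\Gpd(\soc I)\le n\}$. Conversely, if $I$ is injective with $\Gpd(\soc I)\le n$, then the preceding theorem characterising $\mathrm{GP}^{\le n}(\Lambda)$ yields $I\in\mathrm{sub}\Lambda$, i.e.\ $I$ embeds into some $\Lambda^{k}$; since $I$ is injective this embedding splits and $I$ is a direct summand of $\Lambda^{k}$, hence projective-injective. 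This settles the forward direction.

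The substantial direction is the converse: assuming $\Lambda$ is $(n+1)$-Gorenstein and that $\mathrm{prinj}(\Lambda)=\{I\ \text{injective}\mid\Gpd(\soc I)\le n\}$, I must produce $\dom\Lambda\ge n+1$, equivalently show $I^{0},\dots,I^{n}$ are projective. I will prove by induction on $j\in\{0,\dots,n\}$ that $I^{0},\dots,I^{j}$ are projective. For the base case, $\soc\Lambda$ is a submodule of $\Lambda$, so $\Gpd(\soc\Lambda)\le n$; writing $I^{0}$ as the injective envelope of $\soc\Lambda$, each of its indecomposable summands has socle of Gorenstein projective dimension at most $n$, so the hypothesis on $\mathrm{prinj}(\Lambda)$ makes each summand projective, whence $I^{0}$ is projective. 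For the inductive step, suppose $I^{0},\dots,I^{j-1}$ are projective with $j\le n$. The key computation is a dimension shift: breaking the coresolution into short exact sequences $0\to\Omega^{-i}\Lambda\to I^{i}\to\Omega^{-(i+1)}\Lambda\to 0$ and applying $\Hom_{\Lambda}(-,\Lambda)$, the projectivity of $I^{i}$ for $i\le j-1$ gives isomorphisms $\Ext^{m+1}_{\Lambda}(\Omega^{-(i+1)}\Lambda,\Lambda)\cong\Ext^{m}_{\Lambda}(\Omega^{-i}\Lambda,\Lambda)$ for $m\ge 1$. Iterating these $j$ times yields $\Ext^{n+1}_{\Lambda}(\Omega^{-j}\Lambda,\Lambda)\cong\Ext^{n+1-j}_{\Lambda}(\Lambda,\Lambda)=0$, the vanishing holding because $n+1-j\ge 1$ and $\Lambda$ is projective; the bound $j\le n$ is exactly what keeps every intermediate degree at least $1$. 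Using $\Ext^{n+2}_{\Lambda}(-,\Lambda)=0$, the sequence $0\to\soc\Omega^{-j}\Lambda\to\Omega^{-j}\Lambda\to C\to 0$ shows $\Ext^{n+1}_{\Lambda}(\soc\Omega^{-j}\Lambda,\Lambda)$ is a quotient of $\Ext^{n+1}_{\Lambda}(\Omega^{-j}\Lambda,\Lambda)=0$, so $\Gpd(\soc\Omega^{-j}\Lambda)\le n$. Since $\soc I^{j}=\soc\Omega^{-j}\Lambda$, the hypothesis on $\mathrm{prinj}(\Lambda)$ forces $I^{j}$ to be projective, completing the induction.

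I expect the main obstacle to be the apparent circularity in this converse: one wants projectivity of the $I^{j}$ in order to run the Ext-vanishing, yet that vanishing is precisely what establishes the projectivity. The induction along the minimal injective coresolution, combined with the repeated dimension shift reducing $\Ext^{n+1}_{\Lambda}(\Omega^{-j}\Lambda,\Lambda)$ to $\Ext^{n+1-j}_{\Lambda}(\Lambda,\Lambda)$, is exactly what breaks this circularity, and the degree constraint $j\le n$ is what ensures the reduction terminates in a strictly positive degree.
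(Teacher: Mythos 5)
Your proposal is correct and follows essentially the same route as the paper: the forward direction reduces to the previously established characterisations (Theorems 3.2/3.3), and the converse is the same induction along the minimal injective coresolution of $\Lambda$, forcing each $I^{j}$ ($0\leq j\leq n$) to be projective via the hypothesis once its socle is shown to have Gorenstein projective dimension at most $n$. The only difference is bookkeeping: where the paper bounds ${\Gpd}({\soc}\,{\im}f_{j})$ by citing Lemma 2.2 on the exact sequences involved, you unpack the same bound into explicit dimension-shifting computations with ${\Ext}^{*}_{\Lambda}(-,\Lambda)$, which is an equivalent argument.
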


\begin{theorem}{\rm (Theorem 4.3)}

Let $\Lambda$ be an Artin algebra. Then $\Lambda$ is a minimal n-Auslander-Gorenstein algebra if and only if it satisfies {\rm{GP}}$^{\leq n}(\Lambda)=\{N\in \Lambda\text{-}{\mo}\mid {\Gpd(\soc}N)\leq n\}={\rm{sub}}\Lambda$.

\end{theorem}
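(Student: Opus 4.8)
The plan is to prove the two implications separately, in each case reducing to the already-established description of projective-injectives via socles. Throughout I will use the standard properties of Gorenstein projective dimension from Holm's work, in particular that for a short exact sequence $0\to A\to B\to C\to 0$ one has $\Gpd A\leq\max\{\Gpd B,\Gpd C-1\}$, that $\Gpd$ of a direct sum is the maximum of the summands, that $\Gpd(\Omega C)=\max\{\Gpd C-1,0\}$, and the fact quoted in the introduction that over an $(n{+}1)$-Gorenstein algebra every module has $\Gpd\leq n+1$.

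For the forward implication, assume $\Lambda$ is minimal $n$-Auslander-Gorenstein; the self-injective case is trivial since then every module is Gorenstein projective and equals its own image in a projective, so I reduce to the $(n{+}1)$-Gorenstein case. I would establish the three-fold equality by a cycle of inclusions. First, $\mathrm{GP}^{\leq n}(\Lambda)\subseteq\{N\mid\Gpd(\soc N)\leq n\}$: if some simple summand $S$ of $\soc N$ had $\Gpd S=n+1$, then applying the $\Gpd$-inequality to $0\to S\to N\to N/S\to 0$ with $\Gpd N\leq n$ would force $\Gpd(N/S)\geq n+2$, contradicting $\Gpd\leq n+1$; hence every simple summand of $\soc N$ has $\Gpd\leq n$ and so does $\soc N$. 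Second, $\{N\mid\Gpd(\soc N)\leq n\}={\rm sub}\Lambda$: if $\Gpd(\soc N)\leq n$, then each $E(S_i)$ over the simple summands $S_i$ of $\soc N$ is injective with $\Gpd(\soc E(S_i))\leq n$, so by Theorem 4.1 it is projective-injective, whence $E(N)=\bigoplus E(S_i)$ is projective and $N\in{\rm sub}\Lambda$; conversely, if $N\hookrightarrow P$ with $P$ projective, then since $\dom\Lambda\geq 1$ the envelope $E(P)$ is projective-injective, so $\soc N\subseteq\soc E(P)$ together with Theorem 4.1 gives $\Gpd(\soc N)\leq n$. Third, ${\rm sub}\Lambda\subseteq\mathrm{GP}^{\leq n}(\Lambda)$: writing $0\to N\to P\to C\to 0$ with $P$ projective, Schanuel's lemma gives $\Gpd N=\Gpd(\Omega C)=\max\{\Gpd C-1,0\}\leq n$ because $\Gpd C\leq n+1$. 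These three inclusions close the cycle.

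For the reverse implication, assume the three sets coincide. Every first syzygy $\Omega N$ embeds in its projective cover, so $\Omega N\in{\rm sub}\Lambda=\mathrm{GP}^{\leq n}(\Lambda)$, giving $\Gpd(\Omega N)\leq n$ and hence $\Gpd N\leq n+1$ for \emph{every} $\Lambda$-module $N$. Finiteness of $\Gpd$ for all modules makes $\Lambda$ Iwanaga-Gorenstein with $\id{}_{\Lambda}\Lambda=\sup_N\Gpd N\leq n+1$. If $\id{}_{\Lambda}\Lambda\leq n$, then $\mathrm{GP}^{\leq n}(\Lambda)$ is all of $\Lambda$-$\mo$, so ${\rm sub}\Lambda$ is as well; in particular the injective cogenerator embeds in a projective and splits off, forcing $\Lambda$ to be self-injective, which is minimal $n$-Auslander-Gorenstein. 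Otherwise $\id{}_{\Lambda}\Lambda=n+1$, so $\Lambda$ is $(n{+}1)$-Gorenstein and I verify the hypothesis of Theorem 4.1: a projective-injective module lies in ${\rm sub}\Lambda=\{N\mid\Gpd(\soc N)\leq n\}$, so its socle has $\Gpd\leq n$; conversely an injective $I$ with $\Gpd(\soc I)\leq n$ lies in ${\rm sub}\Lambda$, hence embeds in a projective, and being injective splits off as a summand, so $I$ is projective-injective. Thus ${\rm prinj}(\Lambda)=\{I\mid I\text{ injective and }\Gpd(\soc I)\leq n\}$, and Theorem 4.1 yields that $\Lambda$ is minimal $n$-Auslander-Gorenstein.

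I expect the main obstacle to be the reverse implication, specifically packaging the consequences of the bare set-equality so that the hypotheses of Theorem 4.1 are genuinely met: one must first extract Gorenstein-ness and the bound $\id{}_{\Lambda}\Lambda\leq n+1$ from the syzygy argument, and then correctly separate the degenerate self-injective situation (where $\id$ could drop below $n+1$) from the genuine $(n{+}1)$-Gorenstein case before invoking Theorem 4.1. The homological bookkeeping in the short exact sequence estimates and the passage between ${\rm sub}\Lambda$ and the socle condition via injective envelopes are routine once Theorem 4.1 is in hand.
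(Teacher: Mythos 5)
Your proposal is correct and takes essentially the same route as the paper: the forward direction reduces to the Section 3 results (Theorems 3.2/3.3), and the reverse direction uses the identical syzygy argument to obtain Gorensteinness with ${\id}_{\Lambda}\Lambda \leq n+1$, the same case split between ${\id}_{\Lambda}\Lambda < n+1$ (self-injective case) and ${\id}_{\Lambda}\Lambda = n+1$, and the same socle-controlled induction along the minimal injective coresolution. The only difference is organizational: you verify the hypothesis of Theorem 4.1 from the assumed set equality and invoke it as a black box, whereas the paper repeats that coresolution induction inline -- logically the same content.
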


Theorem 2 and Theorem 3 generalise the main results in \cite{LZ} where the results are proved for the special case of Auslander algebras.

This paper is arranged as follows. In section 2, we fix the notions and recall some necessary facts. In section 3, we prove Theorem 1 and further results needed in section 4. Section 4 is devoted to the proof of Theorem 2 and Theorem 3.

\section{Preliminaries}

Throughout this paper, we consider basic Artin algebras. All modules are finitely generated left modules in case nothing is stated otherwise. For an algebra A, we denote by $A$-mod the category of finitely generated left A-modules. For an $A$-module $N$, Gpd$N$ $({\pd}N)$ is the Gorenstein projective (projective) dimension of $N$. We denote by $\Omega^{i}N$ $(\Omega^{-i}N)$ the $i$th syzygy (cosyzygy) of $N$ and by $\tau_{A}$ the Auslander-Reiten translation of A. The composition of morphisms $f:X\rightarrow Y$ and $g:Y\rightarrow Z$ is denoted by $gf: X\rightarrow Z$. We  follow the standard terminologies and notations used in the representation theory of algebras, see \cite{ASS} and \cite{ARS}. For background on Gorenstein homological algebra we refer to \cite{CXW}.

First we recall some results about Gorenstein projective dimensions.

\begin{lemma}{\rm \cite[Proposition 3.2.2]{CXW}}

Let $N$ be an $A$-module of finite Gorenstein projective dimension and $n$ be an integer. Then the following are equivalent:

$(1)$ {\Gpd}$N\leq n$,

$(2)$ ${\Ext}^{i}_{A}(N,L)=0$ for $i\geq n+1$ and any $A$-module $L$ of finite projective dimension,

$(3)$ ${\Ext}^{i}_{A}(N,A)=0$ for $i\geq n+1$,

$(4)$ ${\Ext}^{n+1}_{A}(N,L)=0$ for any $A$-module $L$ of finite projective dimension.

Consequently, {\Gpd}$N={\rm {sup}}\{i\geq0\mid {\Ext}^{i}_{A}(N,A)\neq0\}$.

\end{lemma}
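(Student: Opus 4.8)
The plan is to establish the cycle $(1)\Rightarrow(2)\Rightarrow(3)\Rightarrow(1)$ together with the specialisation $(2)\Rightarrow(4)$ and a short argument $(4)\Rightarrow(1)$, and then to read off the closing formula. I would rely on two standard facts from Gorenstein homological algebra. The first is that a Gorenstein projective module $G$ satisfies $\Ext^{i}_{A}(G,L)=0$ for all $i\geq 1$ and every module $L$ of finite projective dimension; this is built into the definition when $L$ is projective and extends to finite projective dimension by an easy induction along a finite projective resolution of $L$. The second is that every module $K$ of finite Gorenstein projective dimension fits into a short exact sequence $0\to K'\to G\to K\to 0$ in which $G$ is Gorenstein projective and $\pd K'<\infty$.

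For $(1)\Rightarrow(2)$ I would take a Gorenstein projective resolution $0\to G_{n}\to\cdots\to G_{0}\to N\to 0$ and dimension shift: the induced short exact sequences on its syzygies, together with $\Ext^{\geq 1}_{A}(G_{j},L)=0$, give $\Ext^{i}_{A}(N,L)\cong\Ext^{i-n}_{A}(G_{n},L)$ for $i>n$, and the latter vanishes since $G_{n}$ is Gorenstein projective and $\pd L<\infty$. Then $(2)\Rightarrow(3)$ is the case $L=A$ and $(2)\Rightarrow(4)$ is the case $i=n+1$, both immediate.

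The content lies in producing $(1)$ from $(3)$. Choose an ordinary projective resolution of $N$ and set $K=\Omega^{n}N$; since $N$ has finite Gorenstein projective dimension so does $K$. Standard syzygy dimension shifting (using $\Ext^{\geq 1}_{A}(P,A)=0$ for projective $P$) gives $\Ext^{i}_{A}(K,A)\cong\Ext^{i+n}_{A}(N,A)$ for $i\geq 1$, and these vanish by $(3)$; hence $\Ext^{i}_{A}(K,A)=0$ for all $i\geq 1$. It then suffices to prove the key claim $(\star)$: a module $K$ of finite Gorenstein projective dimension with $\Ext^{i}_{A}(K,A)=0$ for all $i\geq1$ is Gorenstein projective. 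Granting $(\star)$, $K=\Omega^{n}N$ is Gorenstein projective, so $0\to K\to P_{n-1}\to\cdots\to P_{0}\to N\to 0$ is a Gorenstein projective resolution of length at most $n$ (projectives being Gorenstein projective) and $\Gpd N\leq n$. The implication $(4)\Rightarrow(1)$ is the same argument: $(4)$ yields $\Ext^{1}_{A}(K,L)\cong\Ext^{n+1}_{A}(N,L)=0$ for every $L$ of finite projective dimension, and this is precisely the hypothesis needed in the splitting step of $(\star)$ below.

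I expect $(\star)$ to be the main obstacle. First I would promote the vanishing into $A$ to vanishing into all modules of finite projective dimension: from $\Ext^{\geq1}_{A}(K,A)=0$ one gets $\Ext^{\geq1}_{A}(K,Q)=0$ for projective $Q$ (a summand of a free module), and then $\Ext^{\geq1}_{A}(K,L)=0$ for every $L$ with $\pd L<\infty$ by induction on $\pd L$ through the sequence $0\to\Omega L\to Q_{0}\to L\to 0$. Now apply the second standard fact to obtain $0\to K'\to G\to K\to 0$ with $G$ Gorenstein projective and $\pd K'<\infty$. Since $\Ext^{1}_{A}(K,K')=0$, this sequence splits, so $K$ is a direct summand of the Gorenstein projective module $G$ and is therefore itself Gorenstein projective. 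The only genuinely nontrivial ingredient here is the existence of the approximation sequence $0\to K'\to G\to K\to 0$, which I would quote from \cite{CXW}. Finally, the closing formula $\Gpd N=\sup\{i\geq0\mid\Ext^{i}_{A}(N,A)\neq0\}$ follows from the equivalence $(1)\Leftrightarrow(3)$: applying it with $n=\Gpd N$ gives $\Ext^{i}_{A}(N,A)=0$ for $i>\Gpd N$, while the contrapositive of $(3)\Rightarrow(1)$ with $n=\Gpd N-1$ forces $\Ext^{\Gpd N}_{A}(N,A)\neq0$.
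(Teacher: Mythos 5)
Your proof is correct. The paper itself gives no argument for this lemma---it is quoted from \cite[Proposition 3.2.2]{CXW} without proof---and your reconstruction (dimension shifting for $(1)\Rightarrow(2)\Rightarrow(3),(4)$, and for the converse the special approximation sequence $0\to K'\to G\to K\to 0$ with $G$ Gorenstein projective and $\pd K'<\infty$ applied to $K=\Omega^{n}N$, together with closure of Gorenstein projectives under direct summands) is precisely the standard argument given in that reference.
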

We remark that in a Gorenstein algebra of self-injective dimension $n$, every module has finite Gorenstein projective dimension at most $n$.

The next lemma gives relations between Gorenstein projective dimensions of the three modules in a short exact sequence.

\begin{lemma}{\rm \cite[Corollary 3.2.4]{CXW}}

Let $0\rightarrow X \rightarrow Y \rightarrow Z\rightarrow 0$ be a short exact sequence in $A$-{\mo}. Then we have

$(1)$ {\Gpd}$Y\leq {\rm max}\{{\Gpd}X, {\Gpd}Z\}$,

$(2)$ {\Gpd}$X\leq {\rm max}\{{\Gpd}Y, {\Gpd}Z-1\}$,

$(3)$ {\Gpd}$Z\leq {\rm max}\{{\Gpd}Y, {\Gpd}X+1\}$.

\end{lemma}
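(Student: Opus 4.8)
The plan is to derive all three inequalities from one long exact sequence, using the $\Ext$-characterization of Gorenstein projective dimension recorded in Lemma 2.1. Write $g_X=\Gpd X$, $g_Y=\Gpd Y$, $g_Z=\Gpd Z$. Whenever the right-hand side of one of the three inequalities equals $\infty$ the assertion is vacuous, so I would first reduce to the case in which the modules involved all have finite Gorenstein projective dimension. This reduction is legitimate because finiteness of Gorenstein projective dimension propagates along a short exact sequence: if two of $X,Y,Z$ have finite Gorenstein projective dimension then so does the third, a standard fact belonging to the same circle of results as Lemma 2.1. In the setting of this paper the reduction is in any case automatic, since over a Gorenstein algebra every module has finite Gorenstein projective dimension, as noted in the remark following Lemma 2.1.

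Granting finiteness, Lemma 2.1 yields $\Gpd N=\sup\{i\geq 0\mid \Ext^i_A(N,A)\neq 0\}$ for $N\in\{X,Y,Z\}$. I would then apply $\Hom_A(-,A)$ to the given sequence and work inside the resulting long exact sequence
\[
\cdots\to \Ext^i_A(Z,A)\to \Ext^i_A(Y,A)\to \Ext^i_A(X,A)\to \Ext^{i+1}_A(Z,A)\to\cdots .
\]
Each inequality now follows by reading off a single three-term exact segment and bounding the index at which the outer two groups vanish. For (1), the segment $\Ext^i_A(Z,A)\to \Ext^i_A(Y,A)\to \Ext^i_A(X,A)$ forces $\Ext^i_A(Y,A)=0$ whenever $i>\max\{g_X,g_Z\}$. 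For (2), the segment $\Ext^i_A(Y,A)\to \Ext^i_A(X,A)\to \Ext^{i+1}_A(Z,A)$ forces $\Ext^i_A(X,A)=0$ as soon as $i>g_Y$ and $i+1>g_Z$, i.e. $i>\max\{g_Y,g_Z-1\}$. For (3), the segment $\Ext^{i-1}_A(X,A)\to \Ext^i_A(Z,A)\to \Ext^i_A(Y,A)$ forces $\Ext^i_A(Z,A)=0$ once $i-1>g_X$ and $i>g_Y$, i.e. $i>\max\{g_X+1,g_Y\}$. Feeding these vanishing ranges back into the formula for $\Gpd$ gives the three bounds.

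The three arguments are formally identical and differ only in which pair of neighbouring $\Ext$-groups is exploited, so the only genuine care is the index bookkeeping, in particular the degree shift in (2) and (3) introduced by the connecting homomorphism. The main obstacle, such as it is, is not the index-chase but the finiteness reduction at the start: the vanishing criterion of Lemma 2.1 is only available for modules already known to have finite Gorenstein projective dimension, so the two-out-of-three finiteness property must be invoked before the $\Ext$ computation can be run. In the Gorenstein context of this paper that point is moot.
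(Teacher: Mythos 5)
The paper offers no proof of this lemma at all: it is quoted directly from \cite{CXW}, so there is nothing internal to compare your argument against. On its own terms your derivation is the standard one and the index bookkeeping is correct in all three cases: applying ${\Hom}_{A}(-,A)$ to the short exact sequence and reading off the three-term segments of the long exact sequence yields exactly the stated bounds once the criterion of Lemma 2.1 is available for each of $X$, $Y$ and $Z$. The only point that deserves more care is the finiteness reduction at the start. Lemma 2.1 applies only to modules already known to have finite Gorenstein projective dimension, so you do need the two-out-of-three finiteness property before the $\Ext$ computation can be run; but that property is \emph{not} a formal consequence of Lemma 2.1, nor of the long exact sequence. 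It is a separate theorem (Holm's Theorem 2.24, or the corresponding statement in \cite{CXW}) whose proof requires constructing Gorenstein projective resolutions via a horseshoe-type argument, so describing it as ``a standard fact belonging to the same circle of results'' conceals a genuine theorem behind a casual phrase; a precise citation there would make the proof complete. As you correctly observe, in the only context where the lemma is used in this paper --- $\Lambda$ an $(n+1)$-Gorenstein algebra, over which every module has finite Gorenstein projective dimension --- the reduction is automatic, so the gap is harmless for the applications, but it should still be acknowledged as an input rather than an afterthought if the lemma is to be proved in the stated generality.
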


Let $\mathcal{C}$ be a finite n-precluster tilting subcategory in $A$-mod and $M$ be a basic additive generator of $\mathcal{C}$, namely $\mathcal{C}=$add$M$. In \cite{IS}, Iyama and Solberg generalised the Auslander correspondence to the following higher version.

\begin{thm}{\rm \cite[Theorem 4.5]{IS}}

There is a bijection between equivalence classes of finite n-precluster tilting subcategories $\mathcal{C}$ and Morita-equivalence classes of minimal n-Auslander-Gorenstein algebras $\Lambda$, where the correspondence is given by $M\mapsto \Lambda={\End}_{A}(M)^{op}$.

\end{thm}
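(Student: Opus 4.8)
The plan is to realise the correspondence $\mathcal{C}=\operatorname{add}M\mapsto \Lambda=\End_{A}(M)^{op}$ and its inverse as mutually inverse assignments, organising everything around the Morita--Tachikawa double-centralizer theorem, the classical characterisation of dominant dimension (due to M\"uller) in terms of self-orthogonality of a generator-cogenerator, and higher Auslander--Reiten duality to control the Gorenstein condition. Throughout, the functor $\Hom_{A}(M,-)$ restricts to an equivalence $\operatorname{add}M\simeq \operatorname{proj}\Lambda$, so that projective $\Lambda$-modules, their resolutions, and the extension groups $\Ext^{*}_{\Lambda}$ can all be read off from data in $\operatorname{add}M$.

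For the forward direction I would first check that $\Lambda=\End_{A}(M)^{op}$ is minimal $n$-Auslander-Gorenstein. Because $M$ is a generator-cogenerator (condition $(1)$) and satisfies $\Ext^{i}_{A}(M,M)=0$ for $0<i<n$ (condition $(3)$), M\"uller's theorem yields $\dom\Lambda\geq n+1$ at once. The functorial finiteness in $(4)$ plays no role here but guarantees $\operatorname{add}M$ is a well-behaved subcategory.

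The genuinely hard half is the bound $\id_{\Lambda}\Lambda\leq n+1$, and I expect this to be the main obstacle. Here conditions $(2)$, namely $\tau_{n}(\mathcal{C})\subseteq\mathcal{C}$ and $\tau^{-}_{n}(\mathcal{C})\subseteq\mathcal{C}$, are exactly what is needed. The strategy is to compute a minimal injective coresolution of $\Lambda$ as a left module: the injective $\Lambda$-modules are the images of $\operatorname{add}M$ under $D\Hom_{A}(-,M)$, and using a form of higher Auslander--Reiten duality $\Ext^{n}_{A}(X,Y)\cong D\underline{\Hom}_{A}(Y,\tau_{n}X)$ one can identify each successive cosyzygy with a module built functorially from $\tau_{n}$ applied to objects of $\mathcal{C}$. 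The $\tau_{n}$-closure then forces the coresolution to stop after at most $n+1$ steps. The technical core is the bookkeeping that translates a minimal projective resolution of a $\Lambda$-module into an approximation sequence in $\mathcal{C}$ and pins down precisely when the syzygies leave $\operatorname{add}M$.

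For the inverse direction, given a minimal $n$-Auslander-Gorenstein $\Lambda$, the inequality $\dom\Lambda\geq 2$ supplies a basic minimal faithful projective-injective module $Q$; I would set $A=\End_{\Lambda}(Q)^{op}$ and recover a generator-cogenerator $M$ with $\End_{A}(M)^{op}\cong\Lambda$ via the inverse Morita--Tachikawa construction. The double-centralizer property gives condition $(1)$, M\"uller's theorem run backwards converts $\dom\Lambda\geq n+1$ into condition $(3)$, and the same higher Auslander--Reiten duality analysis, read in reverse, converts $\id_{\Lambda}\Lambda\leq n+1$ into the $\tau_{n}$-stability $(2)$; condition $(4)$ is automatic since $\operatorname{add}M$ has an additive generator. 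Morita--Tachikawa then shows the two assignments are mutually inverse up to equivalence of subcategories and Morita equivalence. Since every other step is essentially formal, I expect the entire difficulty to concentrate in the single equivalence between the $\tau_{n}$-closure of $\mathcal{C}$ and the Gorenstein injective-dimension bound on $\Lambda$.
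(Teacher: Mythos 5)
First, a point of context: the paper does not prove this statement at all --- it is quoted verbatim from Iyama and Solberg (Theorem 4.5 of \cite{IS}) and used as a black box, so there is no internal proof to compare against. Judged on its own terms, your outline does follow the strategy of the actual proof in \cite{IS}: M\"uller's theorem converts conditions $(1)$ and $(3)$ into the bound $\dom\Lambda\geq n+1$ and back, the Morita--Tachikawa correspondence sets up the bijection at the level of generator-cogenerators versus algebras of dominant dimension at least $2$, and the residual content is precisely the equivalence between the $\tau_{n}$- and $\tau_{n}^{-}$-stability of $\mathcal{C}$ and the bound ${\id}_{\Lambda}\Lambda\leq n+1$. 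Your remark that condition $(4)$ is automatic for a finite subcategory is also correct.

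The genuine gap is that you identify the hard step but do not carry it out. Everything you write about it is conditional (``I expect\ldots'', ``one can identify\ldots'', ``the technical core is the bookkeeping\ldots''): you never actually compute the terms of the minimal injective coresolution of $\Lambda$, never prove the asserted identification of the $(n+2)$-nd cosyzygy with something that vanishes when $\tau_{n}(\mathcal{C})\subseteq\mathcal{C}$, and never run the argument in reverse to extract $\tau_{n}$-stability from ${\id}_{\Lambda}\Lambda\leq n+1$. This is not a routine verification: in \cite{IS} it occupies the bulk of Sections 3 and 4 and requires the existence of the $\Hom_{A}(\mathcal{C},-)$-exact $\mathcal{Z}(\mathcal{C})$-resolutions (quoted in this paper as Lemma 2.5) together with a careful analysis of when $\Ext^{n+1}_{\Lambda}(-,\Lambda)$ vanishes on simple modules --- essentially the computation reproduced in Theorem 3.1 of the present paper. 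Without that, what you have is a correct roadmap of the known proof rather than a proof; the single equivalence you defer is exactly the theorem's content beyond classical Morita--Tachikawa theory.
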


For an n-precluster tilting subcategory $\mathcal{C}$ in $A$-{\mo}, we have $\mathcal{C}^{\perp n-1}=^{\perp n-1}\mathcal{C}$ where
$\mathcal{C}^{\perp n-1}=\{X\in A\text{-}{\mo}\mid{\Ext}^{i}_{A}(\mathcal{C},X)=0 \, {\rm for}\, 0<i<n\}$ and $^{\perp n-1}\mathcal{C}$ is defined similarly. Let $\mathcal{Z}(\mathcal{C})=\mathcal{C}^{\perp n-1}=^{\perp n-1}\mathcal{C}$ and $\mathcal{U}(\mathcal{C})=\mathcal{Z}(\mathcal{C})/[\mathcal{C}]$. We know that $\mathcal{C}\subseteq \mathcal{Z}(\mathcal{C})$. Moreover, $\mathcal{Z}(\mathcal{C})$ has a structure of a Frobenius category and the projective-injective objects are precisely objects in $\mathcal{C}$. Thus $\mathcal{U}(\mathcal{C})$ is a triangulated category. Furthermore, if $\mathcal{C}$ is a finite n-precluster tilting subcategory with an additive generator $M$ and $\Lambda$={\End}$_{A}(M)^{op}$, we have

\begin{thm}{\rm \cite[Theorem 4.7]{IS}}

Let ${\rm GP}(\Lambda)$ be the subcategory of $\Lambda$-{\mo} consisting of all Gorenstein projective $\Lambda$-modules. Then $\mathcal{Z}(\mathcal{C})$ and ${\rm GP}(\Lambda)$ are equivalent via the functor {\Hom}$_{A}(M,-)$. Moreover {\Hom}$_{A}(M,-)$ induces a triangle equivalence between $\mathcal{U}(\mathcal{C})$ and ${\rm \underline{GP}}(\Lambda)$.

\end{thm}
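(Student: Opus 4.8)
The plan is to exhibit $F=\Hom_{A}(M,-)$ as an exact functor between the Frobenius categories $\mathcal{Z}(\mathcal{C})$ and ${\rm GP}(\Lambda)$ that restricts to a bijection on projective-injective objects, and then to invoke the standard principle that such a functor descends to a triangle equivalence of the associated stable categories. First I would record the projectivization equivalence $\Hom_{A}(M,-)\colon {\rm add}\,M \xrightarrow{\sim} {\rm proj}\,\Lambda$, which is fully faithful on ${\rm add}\,M$, sends $M_{i}$ to the indecomposable projective $\Hom_{A}(M,M_{i})$, and in particular identifies the projective-injective objects $\mathcal{C}={\rm add}\,M$ of $\mathcal{Z}(\mathcal{C})$ with the projective (= projective-injective) objects of ${\rm GP}(\Lambda)$. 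The computational engine throughout is the natural isomorphism $\Hom_{\Lambda}(\Hom_{A}(M,X),\Lambda)\cong \Hom_{A}(X,M)$, valid for $X\in\mathcal{Z}(\mathcal{C})$ since $\Lambda=\Hom_{A}(M,M)$ and $F$ is fully faithful into ${\rm proj}\,\Lambda$.

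To see that $F$ maps $\mathcal{Z}(\mathcal{C})$ into ${\rm GP}(\Lambda)$, I would use that $\mathcal{Z}(\mathcal{C})$ is Frobenius with projective-injectives $\mathcal{C}$ to build, for each $X\in\mathcal{Z}(\mathcal{C})$, a complete $\mathcal{C}$-resolution $\cdots\to C_{1}\to C_{0}\to C^{0}\to C^{1}\to\cdots$ with $X=\im(C_{0}\to C^{0})$ and all conflations lying in $\mathcal{Z}(\mathcal{C})$. Applying $F$ yields an exact complex of projective $\Lambda$-modules (the conflations are $\Hom_{A}(M,-)$-exact and each $FC_{i}$ is projective) with $FX$ as the relevant image. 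To check this is a \emph{complete} projective resolution, I would apply $\Hom_{\Lambda}(-,\Lambda)$: by the isomorphism above this turns $FC_{\bullet}$ into $\Hom_{A}(C_{\bullet},M)$, which stays exact because the objects of $\mathcal{C}$ are injective in the Frobenius category $\mathcal{Z}(\mathcal{C})$. Hence $FX$ is Gorenstein projective.

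For full faithfulness of $F$ on $\mathcal{Z}(\mathcal{C})$, I would take a $\mathcal{C}$-presentation $M_{1}\to M_{0}\to X\to 0$ (available since $\mathcal{C}$ is a generator), apply $F$ to obtain a projective presentation of $FX$, and compute $\Hom_{\Lambda}(FX,FY)$ from it; comparing with $\Hom_{A}(X,Y)$ reduces, via the ${\rm add}\,M$-equivalence together with the $\Ext$-vanishing encoded in $Y\in\mathcal{Z}(\mathcal{C})$, to an isomorphism. For density I would start from $G\in{\rm GP}(\Lambda)$ with projective presentation $Q_{1}\to Q_{0}\to G\to 0$, lift $Q_{1}\to Q_{0}$ along the ${\rm add}\,M$-equivalence to a map $M_{1}\to M_{0}$ in ${\rm add}\,M$, set $X=\operatorname{coker}(M_{1}\to M_{0})$ in $A$-{\mo}, and verify that $X\in\mathcal{Z}(\mathcal{C})$ with $FX\cong G$ by exploiting the complete resolution of the Gorenstein projective module $G$.

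Finally, since $F$ is an exact functor between Frobenius categories that carries projective-injectives to projective-injectives bijectively and is an equivalence by the previous steps, it descends to an equivalence $\mathcal{U}(\mathcal{C})=\mathcal{Z}(\mathcal{C})/[\mathcal{C}]\to{\rm \underline{GP}}(\Lambda)$; because $F$ commutes with the cosyzygy shift and sends conflations to conflations, the induced functor respects the triangulated structures, giving the asserted triangle equivalence. The hard part will be the second step: confirming that $\Hom_{\Lambda}(FX,\Lambda)$ is acyclic in the right degrees, which is exactly where the base isomorphism $\Hom_{\Lambda}(F-,\Lambda)\cong\Hom_{A}(-,M)$ and the injectivity of the $\mathcal{C}$-objects in $\mathcal{Z}(\mathcal{C})$ must be combined; equivalently, the delicate point in the density argument is checking that the cokernel $X$ genuinely lands in $\mathcal{Z}(\mathcal{C})$.
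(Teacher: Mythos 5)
First, a caveat about the comparison: the paper you are working from does not prove this statement at all --- it is Theorem 2.4 of the paper, quoted verbatim from Iyama--Solberg \cite[Theorem 4.7]{IS} and used as a black box. So your attempt can only be measured against the standard argument in the literature, not against anything in this paper. With that said, your overall architecture is the right one and close in spirit to the actual proof: projectivization ${\rm add}\,M\simeq{\rm proj}\,\Lambda$, the natural isomorphism $\Hom_{\Lambda}(F-,\Lambda)\cong\Hom_{A}(-,M)$ (which does hold, since $M$ is a generator and hence $F=\Hom_{A}(M,-)$ is fully faithful on all of $A$-mod), transport of complete $\mathcal{C}$-resolutions through the Frobenius structure of $\mathcal{Z}(\mathcal{C})$, and descent of an exact equivalence of Frobenius categories to a triangle equivalence of stable categories. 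In particular your argument that $F$ lands in ${\rm GP}(\Lambda)$ is sound, provided you use that the conflations of the Frobenius structure on $\mathcal{Z}(\mathcal{C})$ are both $\Hom_{A}(\mathcal{C},-)$-exact and $\Hom_{A}(-,\mathcal{C})$-exact (automatic when $n\geq 2$ from the definition of $\mathcal{Z}(\mathcal{C})$; part of the definition of the relative exact structure when $n=1$).

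The genuine gap is essential surjectivity. $F=\Hom_{A}(M,-)$ is only left exact, so if you lift a projective presentation $Q_{1}\rightarrow Q_{0}\rightarrow G\rightarrow 0$ to a map $M_{1}\rightarrow M_{0}$ in ${\rm add}\,M$ and set $X={\rm coker}(M_{1}\rightarrow M_{0})$, there is no reason that $FX\cong G$: $F$ does not commute with cokernels, and the comparison map from $G={\rm coker}(FM_{1}\rightarrow FM_{0})$ to $FX$ can fail to be an isomorphism. (The same left-exactness issue silently infects your full-faithfulness step: an arbitrary $\mathcal{C}$-presentation need not stay exact under $F$; it must be chosen via surjective right ${\rm add}\,M$-approximations.) The repair is to go through kernels rather than cokernels: since $G$ is Gorenstein projective, splicing its complete resolution gives an exact sequence $0\rightarrow G\rightarrow Q^{0}\rightarrow Q^{1}$ with $Q^{i}$ projective; each $Q^{i}$ lies in the essential image of $F$, and the essential image of a fully faithful left exact functor is closed under kernels (if $f=Fg$ then $\ker f=\ker Fg=F(\ker g)$), so $G\cong FX$ with $X=\ker(M^{0}\rightarrow M^{1})$. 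What then still requires proof --- and is the actual content of Iyama--Solberg's argument --- is that this $X$ lies in $\mathcal{Z}(\mathcal{C})=\mathcal{C}^{\perp n-1}$, which needs the $\Ext$-vanishing $\Ext^{i}_{A}(\mathcal{C},X)=0$ for $0<i<n$ to be extracted from the total acyclicity of the complete resolution of $G$; your sketch flags this point as delicate but offers no mechanism for it, and as stated the module it would be applied to is the wrong one.
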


Let $\mathcal{B}$ be a subcategory of $A$-mod. A complex $X^{\bullet}=(X^{i},d^{i})_{i\in \mathbb{Z}}$ of $A$-modules is called ${\Hom}_{A}(\mathcal{B,-})$-exact (${\Hom}_{A}(-,\mathcal{B})$-exact) if $X^{\bullet}$ is exact and for any $Y\in\mathcal{B}$, ${\Hom}_{A}(Y,X^{\bullet})$ $({\Hom}_{A}(X^{\bullet},Y))$ is also exact.

The next lemma shows that for any $A$-module $X$, there exists a ${\Hom}_{A}(\mathcal{C},-)$-exact $\mathcal{Z}(\mathcal{C})$-resolution (${\Hom}_{A}(-,\mathcal{C})$-exact $\mathcal{Z}(\mathcal{C})$-coresolution).

\begin{lemma}{\rm \cite[Corollary 3.16]{IS}}

Let $\mathcal{C}$ be an n-precluster tilting subcategory and X in $A$-{\mo}. Then

\noindent $(1)$ For any $0\leq i \leq n-1$, there exists a ${\Hom}_{A}(\mathcal{C},-)$-exact sequence
$$0\rightarrow C_{n-1}\xrightarrow{f_{n-1}}\cdots \xrightarrow{f_{i+2}}C_{i+1}\xrightarrow{f_{i+1}}Z_{i}\xrightarrow{f_{i}}C_{i-1}\xrightarrow{f_{i-1}}\cdots\xrightarrow{f_{1}}C_{0}\xrightarrow{f_{0}} X\rightarrow 0 $$
 with $Z_{i}$ in $\mathcal{Z}(\mathcal{C})$ and $C_{j}$ in $\mathcal{C}$ for any $j$. Moreover {\rm Im}$f_{j}$ is in $\mathcal{C}^{\perp j}$ for any $j$.

\noindent $(2)$ For any $0\leq i \leq n-1$, there exists a ${\Hom}_{A}(-,\mathcal{C})$-exact sequence
$$0\rightarrow X\xrightarrow{f^{0}}C^{0} \xrightarrow{f^{1}}\cdots \xrightarrow{f^{i-1}}C^{i-1}\xrightarrow{f^{i}}Z^{i}\xrightarrow{f^{i+1}}C^{i+1}\xrightarrow{f^{i+2}}\cdots\xrightarrow{f^{n-1}} C^{n-1}\rightarrow 0 $$
with $Z^{i}$ in $\mathcal{Z}(\mathcal{C})$ and $C^{j}$ in $\mathcal{C}$ for any $j$. Moreover {\rm Im}$f^{j}$ is in $^{\perp j}\mathcal{C}$ for any $j$.

\end{lemma}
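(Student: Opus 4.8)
The plan is to build the sequence in (1) by splicing together two half-resolutions obtained from approximation theory, each governed by one side of the $n$-precluster tilting axioms; part (2) then follows by the dual argument applied to $A^{op}$, using that $\mathcal{C}$ is a cogenerator and that $\tau_{n}$-stability, hence the whole situation, is left-right symmetric.

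For the part of the sequence below the distinguished term I would first exploit that $\mathcal{C}$ is functorially finite and a generator: since every projective lies in $\mathcal{C}$, each module admits a surjective right $\mathcal{C}$-approximation. Starting from $X$ and iterating, I obtain short exact sequences $0\to K_{m}\to C_{m}\to K_{m-1}\to 0$ (with $K_{-1}=X$ and $C_{m}\in\mathcal{C}$) which are ${\Hom}_{A}(\mathcal{C},-)$-exact by the defining property of a right approximation. The key bookkeeping step is a dimension-shift induction: applying ${\Hom}_{A}(C,-)$ for $C\in\mathcal{C}$ and using the vanishing ${\Ext}^{l}_{A}(\mathcal{C},\mathcal{C})=0$ for $0<l<n$, one shows inductively that $K_{m}\in\mathcal{C}^{\perp(m+1)}$ for all $0\le m\le n-2$. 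Truncating at level $i-1$ produces the lower strand $C_{i-1}\to\cdots\to C_{0}\to X\to 0$ whose top kernel satisfies $K_{i-1}\in\mathcal{C}^{\perp i}$; in particular, for $i=n-1$ the top syzygy $K_{n-2}$ already lands in $\mathcal{C}^{\perp(n-1)}=\mathcal{Z}(\mathcal{C})$, which settles that extreme case outright and also yields $\im f_{j}=K_{j-1}\in\mathcal{C}^{\perp j}$ along this strand.

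For general $i$ I would then resolve $K_{i-1}$ "from above" so that a single $\mathcal{Z}(\mathcal{C})$-term sits in position $i$ while genuine $\mathcal{C}$-terms occupy positions $i+1,\dots,n-1$. Concretely I take a right $\mathcal{Z}(\mathcal{C})$-approximation $Z_{i}\to K_{i-1}$ — available because $\mathcal{Z}(\mathcal{C})$ is functorially finite, being the perpendicular category of a functorially finite subcategory, and surjective because $\mathcal{C}\subseteq\mathcal{Z}(\mathcal{C})$ contains the projectives — and then resolve its kernel $L$ by right $\mathcal{C}$-approximations. The content to be verified is that $L$ has $\mathcal{C}$-resolution length at most $n-2-i$, so that the strand closes up with $C_{n-1}$ injecting; this is forced by $K_{i-1}\in\mathcal{C}^{\perp i}$ together with the Frobenius structure on $\mathcal{Z}(\mathcal{C})$ whose projective-injective objects are exactly those of $\mathcal{C}$. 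Splicing the two strands along $K_{i-1}$ and checking ${\Hom}_{A}(\mathcal{C},-)$-exactness at each spot — which reduces, via the long exact sequences, to the established Ext-vanishing and the surjectivity of the approximations — yields the desired sequence, and tracking images gives $\im f_{j}\in\mathcal{C}^{\perp j}$ for every $j$.

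I expect the main obstacle to be the placement and verification of the unique $\mathcal{Z}(\mathcal{C})$-term: one must show both that the kernel $L$ above it is resolvable by $\mathcal{C}$ within the remaining $n-2-i$ steps and that $Z_{i}$ genuinely lies in $\mathcal{Z}(\mathcal{C})=\mathcal{C}^{\perp(n-1)}={}^{\perp(n-1)}\mathcal{C}$. This is exactly the point at which the symmetry $\mathcal{C}^{\perp(n-1)}={}^{\perp(n-1)}\mathcal{C}$ and the $\tau_{n}$-stability of $\mathcal{C}$ are indispensable, since they are what convert the one-sided Ext-vanishing produced by the right-approximation construction into the two-sided condition defining $\mathcal{Z}(\mathcal{C})$. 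Everything else — surjectivity of the approximations, the ${\Hom}_{A}(\mathcal{C},-)$-exactness, and the dimension shifts — is routine once the generator-cogenerator and functorial-finiteness hypotheses are in hand, and statement (2) is obtained verbatim by dualizing, reading the left $\mathcal{C}$-approximations of $X$ against the cogenerator property.
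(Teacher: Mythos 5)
First, a point of comparison: the paper does not prove this lemma at all --- it is quoted verbatim from Iyama--Solberg \cite[Corollary 3.16]{IS} and used as a black box, so your attempt can only be measured against the argument in that source. Your lower strand is correct and does match the first half of the standard construction: since $\mathcal{C}$ is a contravariantly finite generator, iterated surjective right $\mathcal{C}$-approximations yield $\mathrm{Hom}_{A}(\mathcal{C},-)$-exact short exact sequences, and dimension shifting against $\mathrm{Ext}^{l}_{A}(\mathcal{C},\mathcal{C})=0$ for $0<l<n$ places the $m$th kernel in $\mathcal{C}^{\perp (m+1)}$, settling the case $i=n-1$ via $\mathcal{C}^{\perp (n-1)}=\mathcal{Z}(\mathcal{C})$. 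Statement (2) by duality is also fine in principle, as the axioms of an $n$-precluster tilting subcategory are self-dual.

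The genuine gap is in your upper strand, in two places. First, your claim that $\mathcal{Z}(\mathcal{C})$ is functorially finite ``being the perpendicular category of a functorially finite subcategory'' is not a valid general principle, and in the Iyama--Solberg development it is circular: contravariant finiteness of $\mathcal{Z}(\mathcal{C})$ is \emph{deduced from} this very corollary (the $i=0$ sequence exhibits $Z_{0}\rightarrow X$ as a right $\mathcal{Z}(\mathcal{C})$-approximation, by dimension shifting $\mathrm{Ext}^{1}_{A}(Z',\mathrm{Im}f_{1})\cong\mathrm{Ext}^{n-1}_{A}(Z',C_{n-1})=0$ for $Z'\in{}^{\perp (n-1)}\mathcal{C}$), so it cannot be an input to the construction. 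Second, and more fatally, even granted an epimorphism $Z_{i}\rightarrow K_{i-1}$ with $Z_{i}\in\mathcal{Z}(\mathcal{C})$, nothing in your argument forces its kernel $L$ to admit a $\mathcal{C}$-resolution of length $n-2-i$ that \emph{terminates inside} $\mathcal{C}$: iterated right $\mathcal{C}$-approximations only place the successive kernels in the orthogonal classes $\mathcal{C}^{\perp j}$, never in $\mathcal{C}$ itself, so your strand would close (at best) with a second $\mathcal{Z}(\mathcal{C})$-term rather than the required $C_{n-1}\in\mathcal{C}$. The appeal to the Frobenius structure cannot repair this, since that structure concerns conflations all of whose terms lie in $\mathcal{Z}(\mathcal{C})$, whereas $L$ and the splice live in $A$-$\mo$ at large. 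This missing step --- producing, for $Y\in\mathcal{C}^{\perp i}$, a $\mathrm{Hom}_{A}(\mathcal{C},-)$-exact sequence $0\rightarrow C_{n-1}\rightarrow\cdots\rightarrow C_{i+1}\rightarrow Z_{i}\rightarrow Y\rightarrow 0$ --- is exactly the hard content of the result, and in \cite{IS} it is obtained by a different mechanism (dual approximations combined with the higher Auslander--Reiten duality coming from $\tau_{n}$-stability), not by resolving the kernel of a $\mathcal{Z}(\mathcal{C})$-approximation and expecting it to stop. You correctly sensed that this is the crux, but flagging the obstacle is not the same as overcoming it.
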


In \cite{IS},  Iyama and Solberg introduced a higher Auslander-Reiten theory for n-precluster tilting subcategories.

\begin{definition}{\rm \cite[Definition 5.1]{IS}}

Let $A$ be an Artin algebra, $X$ and $Y$ be $A$-modules and $\eta$ be a non-zero element in ${\Ext^{n}_{A}(X,Y)}$.

$(1)$ We say that $\eta$ is a right n-fold almost split extension of $X$ if for any $A$-module $Z$ and a non-zero element $\xi$ in ${\Ext}^{n}_{A}(X,Z)$, there exists a morphism $f:Z\rightarrow Y$ such that $\eta =f\xi$.

$(2)$ We say that $\eta$ is a left n-fold almost split extension of $Y$ if for any $A$-module $Z$  and a non-zero element $\xi$  in ${\Ext}^{n}_{A}(Z,Y)$, there exists a morphism $g: X\rightarrow Z$ such that $\eta =\xi g$.

$(3)$ We say that $\eta$ is an n-fold almost split extension if it is a right n-fold almost split extension of $X$ and a left n-fold almost split extension of $Y$.

\end{definition}

Let $X$ and $Y$ be indecomposable $A$-modules. If $\eta \in{\Ext}^{n}_{A}(X,Y)$ is an n-fold almost split extension,  we have $Y=\tau_{n}X$ and $X=\tau^{-}_{n}Y$.

We denote by $\mathcal{P}(A)$ $(\mathcal{I}(A))$ the full subcategory of $A$-mod consisting of all projective (injective) modules. The following result shows that there exist n-fold almost split extensions in $\mathcal{Z}(\mathcal{C})$.

\begin{thm}{\rm \cite[Theorem 5.10]{IS}}

Let $\mathcal{C}$ be an n-precluster tilting subcategory in $A$-{\mo}, $X$ be an indecomposable module in $\mathcal{Z}(\mathcal{C})\setminus\mathcal{P}(A)$, and $Y:=\tau_{n}(X)$ be the corresponding indecomposable module in $\mathcal{Z}(\mathcal{C})\setminus\mathcal{I}(A)$.

$(1)$ For each $0\leq i \leq n-1$, an n-fold almost split extension in ${\Ext}^{n}_{A}(X,Y)$ can be represented as
$$0\rightarrow Y \rightarrow C_{n-1} \rightarrow \cdots \rightarrow C_{i+1} \rightarrow Z_{i} \rightarrow C_{i-1}
\rightarrow \cdots \rightarrow C_{0} \rightarrow X \rightarrow 0$$
with $Z_{i}$ in $\mathcal{Z}(\mathcal{C})$ and $C_{j}$ in $\mathcal{C}$ for any $j$.

$(2)$ The following sequences are exact.
\begin{equation*}
\begin{split}
0 \rightarrow {\Hom}_{A}(\mathcal{C},Y) \rightarrow {\Hom}_{A}(\mathcal{C},C_{n-1})\rightarrow \cdots \rightarrow {\Hom}_{A}(\mathcal{C},C_{i+1}) \rightarrow {\Hom}_{A}(\mathcal{C},Z_{i}) \\
\rightarrow {\Hom}_{A}(\mathcal{C},C_{i-1}) \rightarrow \cdots \rightarrow {\Hom}_{A}(\mathcal{C},C_{0})\rightarrow {\rm rad}_{A}(\mathcal{C},X) \rightarrow 0,
\end{split}
\end{equation*}
\begin{equation*}
\begin{split}
0 \rightarrow {\Hom}_{A}(X,\mathcal{C}) \rightarrow {\Hom}_{A}(C_{0},\mathcal{C})\rightarrow \cdots \rightarrow {\Hom}_{A}(C_{i-1}, \mathcal{C}) \rightarrow {\Hom}_{A}(Z_{i},\mathcal{C})\\
\rightarrow {\Hom}_{A}(C_{i+1},\mathcal{C}) \rightarrow \cdots \rightarrow {\Hom}_{A}(C_{n-1},\mathcal{C})\rightarrow {\rm rad}_{A}(Y,\mathcal{C}) \rightarrow 0.
\end{split}
\end{equation*}

$(3)$ If $X$ and $Y$ do not belong to $\mathcal{C}$, then the n-fold almost split extension in $(1)$ can be given as a Yoneda product of a minimal $\mathcal{Z}(\mathcal{C})$-resolution of $X$
$$0\rightarrow \Omega^{i}_{\mathcal{Z}(\mathcal{C})}(X)\rightarrow C_{i-1} \rightarrow \cdots C_{0} \rightarrow X \rightarrow 0,$$
an almost split sequence in $\mathcal{Z}(\mathcal{C})$
$$0 \rightarrow \Omega^{-(n-i-1)}_{\mathcal{Z}(\mathcal{C})}(Y) \rightarrow Z_{i} \rightarrow \Omega^{i}_{\mathcal{Z}(\mathcal{C})}(X)\rightarrow 0,$$
and a minimal $\mathcal{Z}(\mathcal{C})$-coresolution of $Y$
$$0\rightarrow Y \rightarrow C_{n-1}\rightarrow \cdots \rightarrow C_{i+1} \rightarrow \Omega ^{-(n-i-1)}_{\mathcal{Z}(\mathcal{C})}(Y)\rightarrow 0$$
where $\Omega^{k}_{\mathcal{Z}(\mathcal{C})}$ $(\Omega^{-l}_{\mathcal{Z}(\mathcal{C})})$ is the $k$th $(l$th$)$ syzygy (cosyzygy) with respect to the minimal $\mathcal{Z}(\mathcal{C})$-resolution $(\mathcal{Z}(\mathcal{C})$-coresolution$)$.

\end{thm}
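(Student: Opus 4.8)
The plan is to deduce the $n$-fold almost split theory for $X$ and $Y=\tau_n X$ from ordinary Auslander--Reiten theory inside the Frobenius category $\mathcal{Z}(\mathcal{C})$, whose projective-injective objects are exactly $\mathcal{C}$ and whose stable category $\mathcal{U}(\mathcal{C})$ is triangulated with shift $[1]=\Omega^{-1}_{\mathcal{Z}(\mathcal{C})}$. The structural input I would set up first is a pair of functorial isomorphisms for indecomposable non-projective $X,Z\in\mathcal{Z}(\mathcal{C})$,
$$\Ext^{n}_{A}(X,Z)\cong\Hom_{\mathcal{U}(\mathcal{C})}(X,\Omega^{-n}_{\mathcal{Z}(\mathcal{C})}Z)\cong D\Hom_{\mathcal{U}(\mathcal{C})}(Z,\tau_n X),$$
where the first isomorphism compares absolute $\Ext^{n}_{A}$ with $n$-fold extensions in the exact structure of $\mathcal{Z}(\mathcal{C})$, and the second is Serre duality on $\mathcal{U}(\mathcal{C})$ with Serre functor $\mathbb{S}\cong\tau_n\Omega^{-n}_{\mathcal{Z}(\mathcal{C})}$, equivalently with Auslander--Reiten translation $\tau_{\mathcal{U}}=\tau_n\Omega^{-(n-1)}_{\mathcal{Z}(\mathcal{C})}$. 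Establishing these, and in particular identifying $\mathbb{S}$ with $\tau_n=\tau\Omega^{n-1}_{A}$ up to the $n$-fold shift, is the heart of the matter and the step I expect to be hardest; it is precisely here that Lemma 2.5 does the real work, since the $\Hom_{A}(\mathcal{C},-)$-exact $\mathcal{Z}(\mathcal{C})$-resolutions are what convert honest $\Ext^{n}_{A}$-classes into stable homomorphisms.

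Granting the duality, I would prove $(1)$ and $(3)$ together by a Yoneda splice of three sequences, treating the generic case $X,Y\notin\mathcal{C}$ (the remaining case $X\in\mathcal{C}\setminus\mathcal{P}(A)$ being degenerate and easier). First, the truncated minimal $\mathcal{Z}(\mathcal{C})$-resolution of $X$,
$$0\to\Omega^{i}_{\mathcal{Z}(\mathcal{C})}(X)\to C_{i-1}\to\cdots\to C_{0}\to X\to 0,$$
has all inner terms in $\mathcal{C}$. As $\Omega^{i}_{\mathcal{Z}(\mathcal{C})}(X)$ is a non-zero indecomposable object of $\mathcal{U}(\mathcal{C})$, the existence of Auslander--Reiten triangles in $\mathcal{U}(\mathcal{C})$ (equivalently almost split sequences in $\mathcal{Z}(\mathcal{C})$) provides
$$0\to\Omega^{-(n-i-1)}_{\mathcal{Z}(\mathcal{C})}(Y)\to Z_{i}\to\Omega^{i}_{\mathcal{Z}(\mathcal{C})}(X)\to 0$$
with $Z_{i}\in\mathcal{Z}(\mathcal{C})$, the left-hand term being $\tau_{\mathcal{U}}\Omega^{i}_{\mathcal{Z}(\mathcal{C})}(X)\cong\Omega^{-(n-i-1)}_{\mathcal{Z}(\mathcal{C})}\tau_n(X)=\Omega^{-(n-i-1)}_{\mathcal{Z}(\mathcal{C})}(Y)$ by the computation of $\tau_{\mathcal{U}}$ together with $Y=\tau_n X$. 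Splicing with the truncated minimal $\mathcal{Z}(\mathcal{C})$-coresolution of $Y$,
$$0\to Y\to C_{n-1}\to\cdots\to C_{i+1}\to\Omega^{-(n-i-1)}_{\mathcal{Z}(\mathcal{C})}(Y)\to 0,$$
whose inner terms lie in $\mathcal{C}$, produces the asserted representative $\eta\in\Ext^{n}_{A}(X,Y)$.

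To see that $\eta$ is $n$-fold almost split I would transport the defining properties of the middle almost split sequence across the duality. For a non-zero $\xi\in\Ext^{n}_{A}(X,W)$, its image under $\Ext^{n}_{A}(X,-)\cong D\Hom_{\mathcal{U}(\mathcal{C})}(-,\tau_n X)$ is a non-zero stable map that, by the almost-split property of the triangle on $\Omega^{i}_{\mathcal{Z}(\mathcal{C})}(X)$, factors through the distinguished one; translating back yields $f\colon W\to Y$ with $\eta=f\xi$, and the left almost-split property is dual via $\Ext^{n}_{A}(-,Y)$. The point needing care is that the $\mathcal{C}$-resolution and $\mathcal{C}$-coresolution tails contribute no obstruction, i.e.\ that $\Ext^{n}_{A}(X,-)$ agrees with the stable-Hom groups at $\Omega^{i}_{\mathcal{Z}(\mathcal{C})}(X)$; this is exactly guaranteed by the $\Hom_{A}(\mathcal{C},-)$- and $\Hom_{A}(-,\mathcal{C})$-exactness in Lemma 2.5, which turns the truncations into dimension shifts with no extra cohomology.

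Finally, part $(2)$ follows by applying $\Hom_{A}(\mathcal{C},-)$ (resp.\ $\Hom_{A}(-,\mathcal{C})$) to the spliced sequence: the resolution and coresolution tails are $\Hom$-exact relative to $\mathcal{C}$ by Lemma 2.5, and the middle almost split sequence becomes $\Hom_{A}(\mathcal{C},-)$-exact because its three terms lie in $\mathcal{Z}(\mathcal{C})=\mathcal{C}^{\perp n-1}$. The only genuinely new feature is that the final cokernel is $\rad_{A}(\mathcal{C},X)$ rather than $\Hom_{A}(\mathcal{C},X)$; this reflects that $C_{0}\to X$ is a right $\mathcal{C}$-approximation which is not a split epimorphism, so that $\Hom_{A}(\mathcal{C},C_{0})\to\Hom_{A}(\mathcal{C},X)$ has image the radical, which I would read off from the indecomposability of $X$ and the minimality of the chosen resolution, and the dual argument gives $\rad_{A}(Y,\mathcal{C})$.
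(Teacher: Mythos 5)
The first thing to note is that the paper contains no proof of this statement: it is reproduced verbatim as \cite[Theorem 5.10]{IS} and used purely as an external input (in the proof of Theorem 3.1), so there is no internal argument to measure yours against; the only meaningful comparison is with Iyama and Solberg's own proof.

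Against that benchmark your outline has the right shape --- the Frobenius structure on $\mathcal{Z}(\mathcal{C})$ with projective-injectives $\mathcal{C}$, Auslander--Reiten theory in the stable category $\mathcal{U}(\mathcal{C})$, and a Yoneda splice of the truncated $\mathcal{Z}(\mathcal{C})$-resolution of $X$, an almost split sequence, and the truncated $\mathcal{Z}(\mathcal{C})$-coresolution of $Y$ --- but it is an outline rather than a proof, because the two facts you ``grant'' are precisely the substance of \cite[Section 5]{IS}. Concretely: (i) the existence of almost split sequences in the exact category $\mathcal{Z}(\mathcal{C})$ is not automatic and must be established; and (ii) the identification of the Auslander--Reiten translate of $\mathcal{U}(\mathcal{C})$ as $\Omega^{-(n-1)}_{\mathcal{Z}(\mathcal{C})}\tau_{n}$, together with the duality ${\Ext}^{n}_{A}(X,-)\cong D\,{\Hom}(-,\tau_{n}X)$ modulo the appropriate ideal, is a genuine computation that uses $\tau_{n}(\mathcal{C})\subseteq\mathcal{C}$ and $\tau^{-}_{n}(\mathcal{C})\subseteq\mathcal{C}$ in an essential way; without it one cannot even see that $Y=\tau_{n}X$ lies in $\mathcal{Z}(\mathcal{C})$, nor that $\tau_{n}$ commutes with $\Omega^{\pm}_{\mathcal{Z}(\mathcal{C})}$ in $\mathcal{U}(\mathcal{C})$, which you invoke to rewrite the left end of the almost split sequence as $\Omega^{-(n-i-1)}_{\mathcal{Z}(\mathcal{C})}(Y)$. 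Two further points are glossed over in part (2): the first long exact sequence needs the coresolution half of the splice to be ${\Hom}_{A}(\mathcal{C},-)$-exact, which is not what Lemma 2.5(2) supplies (that gives ${\Hom}_{A}(-,\mathcal{C})$-exactness) and must instead be extracted from the vanishing of ${\Ext}^{j}_{A}(\mathcal{C},-)$ on the cosyzygies; and the identification of the final cokernel with ${\rad}_{A}(\mathcal{C},X)$ is not simply ``approximation that is not a split epimorphism'' --- when $X\in\mathcal{C}$ it requires $C_{0}\rightarrow X$ to be a sink map in $\mathcal{C}$, i.e.\ minimality of the resolution, whereas when $X\notin\mathcal{C}$ one has ${\rad}_{A}(\mathcal{C},X)={\Hom}_{A}(\mathcal{C},X)$ and only the approximation property is needed. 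In short: correct strategy, consistent with the source, but the load-bearing steps are deferred rather than carried out.
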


\section{Gorenstein projective dimensions of modules over minimal Auslander-Gorenstein algebras}

In this section, we investigate the relations between the Gorenstein projective dimensions of modules over minimal Auslander-Gorenstein algebras and their socles.

Let $A$ be an Artin algebra and $\mathcal{C}$ be a finite n-precluster tilting subcategory in $A$-mod with a basic additive generator $M$. Then we have $\mathcal{C}=$add$M$ and $\Lambda={\End}_{A}(M)^{op}$ is a minimal n-Auslander-Gorenstein algebra. Since the socle of a $\Lambda$-module is the direct sum of its simple submodules, we first calculate the Gorenstein projective dimensions of all simple $\Lambda$-modules.

\begin{thm}

Let $M=\oplus^{t}_{i=1}M_{i}$ with $M_{i}$ indecomposable and $S_{i}$ be the top of the indecomposable projective $\Lambda$-module ${\Hom}_{A}(M,M_{i})$. Then we have

$(1)$ If $M_{i}$ is a projective $A$-module, then the Gorenstein projective dimension of $S_{i}$ is at most n,

$(2)$ If $M_{i}$ is a non-projective $A$-module, then the Gorenstein projective dimension of $S_{i}$ is equal to n+1.

\end{thm}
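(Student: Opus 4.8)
The key computational tool is Lemma 2.1, which says that for a module of finite Gorenstein projective dimension, $\Gpd N = \sup\{i \geq 0 \mid \Ext^i_A(N, A) \neq 0\}$. Here every $\Lambda$-module has finite Gorenstein projective dimension at most $n+1$ since $\Lambda$ is $(n+1)$-Gorenstein. So the strategy is to compute, for each simple module $S_i = \top \Hom_A(M, M_i)$, the values $\Ext^j_\Lambda(S_i, \Lambda)$ and determine the largest $j$ for which this is nonzero. The plan is to read off projective resolutions of the $S_i$ from the resolutions of $X = M_i$ (or rather their images under $\Hom_A(M, -)$) provided by the structural results in Section 2.

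**First steps.**
First I would use the standard correspondence between $\Lambda$-modules and functors on $\mathcal{C} = \mathrm{add}\,M$: the functor $\Hom_A(M, -)$ sends objects of $\mathcal{C}$ to projective $\Lambda$-modules, and sends the almost split data of Theorem 2.5 to projective $\Lambda$-resolutions and coresolutions. The minimal projective presentation of $S_i$ begins $\Hom_A(M, M_i') \to \Hom_A(M, M_i) \to S_i \to 0$ where $\Hom_A(M, M_i') \to \Hom_A(M, M_i)$ is a projective cover of $\rad_A(M, M_i)$. More importantly, when $M_i$ is non-projective in $A$, I expect to continue the projective resolution of $S_i$ using exactly the $\Hom_A(\mathcal{C}, -)$-exact sequence of Lemma 2.4(1), or equivalently the bottom half of the $n$-fold almost split extension of Theorem 2.5(1)(2), which realizes $\Hom_A(M, C_0) \to \rad_A(M, X) \to 0$ and continues leftward through the $\Hom_A(M, C_j)$ up to $\Hom_A(M, Y)$ where $Y = \tau_n(M_i)$.

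**Carrying it out.**
For part (2), suppose $M_i$ is non-projective; then $M_i \in \mathcal{Z}(\mathcal{C}) \setminus \mathcal{P}(A)$ and $Y = \tau_n(M_i) \in \mathcal{Z}(\mathcal{C}) \setminus \mathcal{I}(A)$ is nonzero. The exact sequence of Theorem 2.5(2) (with, say, the relevant choice of $i$) gives a projective $\Lambda$-resolution of $S_i$ of length $n+1$, namely $0 \to \Hom_A(M, Y) \to \Hom_A(M, C_{n-1}) \to \cdots \to \Hom_A(M, C_0) \to \Hom_A(M, M_i) \to S_i \to 0$. Applying $\Hom_\Lambda(-, \Lambda)$ and identifying $\Lambda \cong \Hom_A(M, M)$, the cohomology in the top degree computes $\Ext^{n+1}_\Lambda(S_i, \Lambda)$, and the $\Hom_A(-, \mathcal{C})$-exactness assertion of Theorem 2.5 (the second displayed sequence there) is precisely what is needed to show this top Ext group is nonzero: the obstruction is the radical map $\rad_A(Y, \mathcal{C})$, which is nonzero because $Y \neq 0$ is not injective, so it admits a nonzero non-split map into some object of $\mathcal{C}$. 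Hence $\Gpd S_i = n+1$. For part (1), if $M_i$ is projective then $M_i \in \mathcal{P}(A) \subseteq \mathcal{Z}(\mathcal{C})$ and there is no $\tau_n$-partner; the resolution of $S_i$ terminates one step earlier (the $\Hom_A(M, Y)$ term disappears), giving a projective resolution of length at most $n$, so $\Ext^{j}_\Lambda(S_i, \Lambda) = 0$ for $j \geq n+1$ and Lemma 2.1 yields $\Gpd S_i \leq n$.

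**The main obstacle.**
I expect the hard part to be showing that the top Ext group in part (2) is genuinely nonzero — that is, that the resolution I write down does not accidentally split off or that the dual complex has nonvanishing cohomology in degree $n+1$. This is where the $n$-fold almost split structure of Theorem 2.5 is doing the real work rather than the mere existence of a length-$(n+1)$ resolution: I must verify that under $\Hom_\Lambda(-,\Lambda) \simeq \Hom_A(-, M)$ applied to the resolution, the cokernel at the top is exactly $\rad_A(Y, \mathcal{C})/(\text{image})$ and that non-injectivity of $Y$ forces this to be nonzero, for instance by pairing against the minimal injective copresentation of $Y$ or by invoking that $\tau_n$-periodicity detects a nonsplit $n$-extension. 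I would also need to handle carefully the identification of $\Hom_\Lambda(\Hom_A(M,-), \Lambda)$ with $\Hom_A(-, M)$ and the exactness bookkeeping so that the computation of $\Ext^{n+1}$ is legitimate.
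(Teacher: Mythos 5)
Your overall skeleton matches the paper's: both proofs build a length-$(n+1)$ Gorenstein projective resolution of $S_{i}$ from the $n$-fold almost split extension of Theorem 2.7, apply ${\Hom}_{\Lambda}(-,\Lambda)$, and invoke Lemma 2.1. But your argument for the decisive step of part (2) --- that ${\Ext}^{n+1}_{\Lambda}(S_{i},\Lambda)\neq 0$ --- contains a genuine error. After dualising, ${\Ext}^{n+1}_{\Lambda}(S_{i},\Lambda)$ is the cokernel of ${\Hom}_{A}(C_{n-1},M)\rightarrow {\Hom}_{A}(Y,M)$ where $Y=\tau_{n}M_{i}$, and by the second exact sequence of Theorem 2.7(2) the image of this map is exactly ${\rad}_{A}(Y,M)$. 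So the obstruction group is ${\Hom}_{A}(Y,M)/{\rad}_{A}(Y,M)$, not ${\rad}_{A}(Y,\mathcal{C})$ and not ``${\rad}_{A}(Y,\mathcal{C})/(\text{image})$'' (the latter is zero by that very theorem). Consequently your reason for non-vanishing is also the wrong one: a ``nonzero non-split map'' from $Y$ into an object of $\mathcal{C}$ lies \emph{inside} the radical and contributes nothing to this quotient; what you need is a map $Y\rightarrow M$ that is \emph{not} radical, and such a map exists precisely because $\mathcal{C}$ is closed under $\tau_{n}$, so the indecomposable $Y$ is a direct summand of $M$ and the split inclusion is non-radical. (Non-injectivity of $Y$ is irrelevant here.) The paper sidesteps this computation: it first shows, using the same radical-surjectivity from Theorem 2.7(2), that $f:\tau_{n}M_{i}\rightarrow C_{n-1}$ is not a section, and then argues by contradiction that ${\Ext}^{n+1}_{\Lambda}(S_{i},\Lambda)=0$ would force ${\Hom}_{A}(M,f)$, hence $f$ (via the equivalence of Theorem 2.4), to be a section.

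Two smaller gaps. First, the middle term $Z_{s}$ of the almost split extension lies only in $\mathcal{Z}(\mathcal{C})$, so ${\Hom}_{A}(M,Z_{s})$ is Gorenstein projective but in general not projective; your ``projective resolution'' is really a Gorenstein projective resolution, and the dimension shift computing ${\Ext}^{n+1}$ must use that Gorenstein projective modules lie in $^{\perp}\Lambda$, as the paper does. Second, in part (1) you cannot obtain the resolution by ``dropping the ${\Hom}_{A}(M,Y)$ term'': for projective $M_{i}$ there is no $n$-fold almost split extension ending at $M_{i}$ to truncate. The paper instead shows that the inclusion ${\rad}M_{i}\rightarrow M_{i}$ is right almost split, whence ${\rad}_{A}(M,M_{i})\cong{\Hom}_{A}(M,{\rad}M_{i})$, and then resolves the $A$-module ${\rad}M_{i}$ by Lemma 2.5(1); your sketch never identifies the $A$-module whose $\mathcal{Z}(\mathcal{C})$-resolution continues the projective cover of $S_{i}$, and this identification is where projectivity of $M_{i}$ is actually used.
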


\begin{proof}

$(1)$ Assume $M_{i}$ is a projective $A$-module, then the inclusion $f:{\rad}M_{i}\rightarrow M_{i}$ is a right almost split morphism. We claim that {\rm Im}${\Hom}_{A}(M,f)={\rad}_{A}(M,M_{i})$.

Let $h\in {\rad}_{A}(M_{j},M_{i})$, then $h$ is not a retraction. Since $f$ is a right almost split morphism, there exists $g:M_{j}\rightarrow {\rad}M_{i}$ such that $h=fg\in$Im${\Hom}_{A}(M_{j},f)$. Thus ${\rad}_{A}(M_{j},M_{i})\subseteq$Im${\Hom}_{A}(M_{j},f)$. For the reverse inclusion, assume first $j\neq i$, then {\rad}$(M_{j},M_{i})={\Hom}_{A}(M_{j},M_{i})$ and clearly Im${\Hom}_{A}(M_{j},f)\subseteq$ ${\rad}_{A}(M_{j},M_{i})$.
If $j=i$, let $h\in$Im${\Hom}_{A}(M_{i},f)$, then there exists $g:M_{i}\rightarrow {\rad}M_{i}$ such that $h=fg$. $h$ is not an isomorphism otherwise $f$ is a retraction which contradicts the fact that $f$ is right almost split. Thus $h\in{\rad}_{A}(M_{i},M_{i})$. Then we get {\rm Im}${\Hom}_{A}(M,f)=\oplus^{t}_{j=1}{\rm{Im}\Hom}_{A}(M_{j},f)=\oplus^{t}_{j=1}{\rad}_{A}(M_{j},M_{i})={\rad}_{A}(M,M_{i})$
and a short exact sequence $$0 \rightarrow {\Hom}_{A}(M,{\rad}M_{i}) \rightarrow {\Hom}_{A}(M,M_{i}) \rightarrow S_{i} \rightarrow 0.$$
Now consider ${\rad}M_{i}$, by Lemma 2.5(1), there exists a long exact sequence $$0\rightarrow C_{n-1}\rightarrow\cdots \rightarrow C_{s+1}\rightarrow Z_{s}\rightarrow C_{s-1}\rightarrow \cdots\rightarrow C_{0}\rightarrow {\rad}M_{i}\rightarrow 0 $$
with $Z_{s}$ in $\mathcal{Z}(\mathcal{C})$ for some $0\leq s \leq n-1$ and $C_{j}$ in $\mathcal{C}$ for any $j$. Applying ${\Hom}_{A}(M,-)$, we get the following long exact sequence
\begin{equation*}
\begin{split}
0\rightarrow {\Hom}_{A}(M,C_{n-1})\rightarrow \cdots \rightarrow {\Hom}_{A}(M,C_{s+1}) \rightarrow {\Hom}_{A}(M,Z_{s}) \rightarrow \\
{\Hom}_{A}(M,C_{s-1}) \rightarrow \cdots {\Hom}_{A}(M,C_{0}) \rightarrow {\Hom}_{A}(M,{\rad}M_{i}) \rightarrow 0.
\end{split}
\end{equation*}
According to Theorem 2.4, we know that the following long exact sequence is a Gorenstein projective resolution of $S_{i}$.
\begin{equation*}
\begin{split}
0\rightarrow {\Hom}_{A}(M,C_{n-1})\rightarrow \cdots \rightarrow {\Hom}_{A}(M,C_{s+1}) \rightarrow {\Hom}_{A}(M,Z_{s}) \rightarrow \\
{\Hom}_{A}(M,C_{s-1}) \rightarrow \cdots {\Hom}_{A}(M,C_{0}) \rightarrow {\Hom}_{A}(M,M_{i}) \rightarrow S_{i}\rightarrow 0
\end{split}
\end{equation*}
Thus Gpd$S_{i}\leq n$.

$(2)$ Assume $M_{i}$ is not projective. By Theorem 2.7(1), the n-fold almost split extension in ${\Ext}^{n}_{A}(M_{i},\tau_{n}M_{i})$ can be represented as
$$0\rightarrow \tau_{n}M_{i} \xrightarrow{f} C_{n-1} \rightarrow \cdots \rightarrow C_{s+1} \rightarrow Z_{s} \rightarrow C_{s-1}
\rightarrow \cdots \rightarrow C_{0} \rightarrow M_{i} \rightarrow 0 \qquad (*)$$
with $Z_{s}$ in $\mathcal{Z}(\mathcal{C})$ for some $0\leq s \leq n-1$ and $C_{j}$ in $\mathcal{C}$ for any $j$.

Claim 1: $f:\tau_{n}M_{i} \rightarrow C_{n-1}$ is not a section. Otherwise there exists $g:C_{n-1}\rightarrow \tau_{n}M_{i}$ such that $gf={\rm id}_{\tau_{n}M_{i}}$. Since $\mathcal{C}$ is closed under $\tau_{n}$ and $M_{i}$ is indecomposable, $\tau_{n}M_{i}\in \mathcal{C}$ is also indecomposable. Applying ${\Hom}_{A}(-,\tau_{n}M_{i})$ to $(*)$, according to Theorem 2.7(2), the following sequence is exact.
\begin{equation*}
\begin{split}
0 \rightarrow {\Hom}_{A}(M_{i},\tau_{n}M_{i}) \rightarrow {\Hom}_{A}(C_{0},\tau_{n}M_{i})\rightarrow \cdots \rightarrow {\Hom}_{A}(C_{s-1}, \tau_{n}M_{i}) \rightarrow {\Hom}_{A}(Z_{s},\tau_{n}M_{i})\\
\rightarrow {\Hom}_{A}(C_{s+1},\tau_{n}M_{i}) \rightarrow \cdots \rightarrow {\Hom}_{A}(C_{n-1},\tau_{n}M_{i})\xrightarrow{{\Hom}_{A}(f,\tau_{n}M_{i})} {\rm rad}_{A}(\tau_{n}M_{i},\tau_{n}M_{i}) \rightarrow 0.
\end{split}
\end{equation*}
Then ${\Hom}_{A}(f,\tau_{n}M_{i})(g)=gf={\rm id}_{\tau_{n}M_{i}} \in {\rm rad}_{A}(\tau_{n}M_{i},\tau_{n}M_{i})$, a contradiction.

Applying ${\Hom}_{A}(M,-)$ to $(*)$, by Theorem 2.7(2) again, the following long exact sequence is exact.
\begin{equation*}
\begin{split}
0 \rightarrow {\Hom}_{A}(M,\tau_{n}M_{i}) \rightarrow {\Hom}_{A}(M,C_{n-1})\rightarrow \cdots \rightarrow {\Hom}_{A}(M,C_{s+1}) \rightarrow {\Hom}_{A}(M,Z_{s}) \\
\rightarrow {\Hom}_{A}(M,C_{s-1}) \rightarrow \cdots \rightarrow {\Hom}_{A}(M,C_{0})\rightarrow {\rm rad}_{A}(M,M_{i}) \rightarrow 0,
\end{split}
\end{equation*}
Then we get a Gorenstein projective resolution of $S_{i}$.
\begin{equation*}
\begin{split}
0 \rightarrow {\Hom}_{A}(M,\tau_{n}M_{i}) \xrightarrow{{\Hom}_{A}(M,f)} {\Hom}_{A}(M,C_{n-1})\rightarrow \cdots \rightarrow {\Hom}_{A}(M,C_{s+1}) \rightarrow {\Hom}_{A}(M,Z_{s}) \\
\rightarrow {\Hom}_{A}(M,C_{s-1}) \rightarrow \cdots \rightarrow {\Hom}_{A}(M,C_{0})\rightarrow {\Hom}_{A}(M,M_{i}) \rightarrow S_{i}\rightarrow 0 (**)
\end{split}
\end{equation*}

Claim 2: ${\Ext}^{n+1}_{\Lambda}(S_{i},\Lambda)\neq 0$. Otherwise suppose ${\Ext}^{n+1}_{\Lambda}(S_{i},\Lambda)= 0$. Note that ${\Hom}_{A}(M,Z_{s})\in ^{\perp}\Lambda$ since ${\Hom}_{A}(M,Z_{s})$ is Gorenstein projective. Applying ${\Hom}_{\Lambda}(-,\Lambda)$ to $(**)$, by dimension shifting, we get ${\Hom}_{A}(M,f)$ is a section. Since ${\Hom}_{A}(M,-)$ is an equivalence between $\mathcal{Z}(\mathcal{C})$ and ${\rm GP}(\Lambda)$, there exists $g:C_{n-1}\rightarrow \tau_{n}M_{i}$ such that ${\Hom}_{A}(M,g){\Hom}_{A}(M,f)={\Hom}_{A}(M,gf)={\rm id}_{{\Hom}_{A}(M,\tau_{n}M_{i})}$. Thus $gf={\rm id}_{\tau_{n}M_{i}}$ and $f$ is a section, which contradicts Claim 1.

Thus ${\Ext}^{n+1}_{\Lambda}(S_{i},\Lambda)\neq 0$ and by Lemma 2.1, ${\Gpd}S_{i}=n+1$.

\end{proof}

It is known that the socle of a $\Lambda$-module $N$ coincides with the socle of its injective envelope $I(N)$. Next we investigate the injective modules over the minimal n-Auslander-Gorenstein algebra $\Lambda$ and give a description of all the projective-injective $\Lambda$-modules.

\begin{thm}

Let $\Lambda$ be a minimal n-Auslander-Gorenstein algebra. Then an injective $\Lambda$-module $I$ is projective if and only if the Gorenstein projective dimension of its socle is at most n, that is {\rm{prinj}}$(\Lambda)$=$\{I\in \Lambda{\text{-}\mo}\mid I$ is injective and ${\Gpd(\soc}I)\leq n\}$.

\end{thm}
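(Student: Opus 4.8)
The plan is to prove both inclusions of the asserted set equality, using the computation of $\Gpd S_i$ from Theorem 3.1 together with the dominant dimension structure of $\Lambda$. Recall that $\soc N = \soc I(N)$, so the socle of an injective module is built from the simple modules $S_i$, and by Theorem 3.1 we know exactly which $S_i$ have Gorenstein projective dimension at most $n$ (namely those with $M_i$ projective as an $A$-module) versus equal to $n+1$ (those with $M_i$ nonprojective). First I would establish the dictionary: the indecomposable projective-injective $\Lambda$-modules correspond, under $\Hom_A(M,-)$, to the injective $A$-modules lying in $\mathcal{C}$, and more usefully I want to identify the socle of an indecomposable injective $\Lambda$-module in terms of the $S_i$. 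Since $\domdim \Lambda \geq n+1 \geq 2$, every indecomposable projective $\Lambda$-module has injective envelope that is projective-injective, and the simple socles of projective-injective modules are precisely those $S_i$ with $M_i$ projective; this is the key structural fact linking Theorem 3.1 to socles.

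For the forward direction, suppose $I$ is projective-injective. I would argue that $\soc I$ is a direct sum of simples $S_i$ with $M_i$ projective, hence by Theorem 3.1(1) each summand has $\Gpd \leq n$, and then by Lemma 2.2(1) applied inductively to the socle filtration (or directly, since $\soc I$ is semisimple and $\Gpd$ of a direct sum is the max of the $\Gpd$'s) we get $\Gpd(\soc I)\leq n$. The content here is showing that a projective-injective module cannot have a simple summand $S_i$ with $M_i$ nonprojective in its socle — equivalently, that the socle of an indecomposable projective-injective $\Lambda$-module is $S_j$ with $M_j$ injective-and-projective in $\mathcal{C}$, which forces $M_j$ projective.

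For the reverse direction, suppose $I$ is injective with $\Gpd(\soc I)\leq n$. Writing $\soc I = \bigoplus S_i$, the bound on the direct sum forces every simple summand to satisfy $\Gpd S_i \leq n$, so by the contrapositive of Theorem 3.1(2) each such $M_i$ is projective. I then need to conclude that $I$ itself is projective. Here I would use that an injective module is determined by its socle, and that injective envelopes of simples $S_i$ with $M_i$ projective are exactly the projective-injective indecomposables — so each indecomposable summand of $I$, being the injective envelope of such a simple, is projective-injective, and hence $I$ is projective.

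The main obstacle I expect is the bookkeeping identifying, via the equivalence $\Hom_A(M,-)\colon \mathcal{Z}(\mathcal{C})\xrightarrow{\sim}\mathrm{GP}(\Lambda)$ of Theorem 2.4, precisely which simples appear in the socle of a given indecomposable injective $\Lambda$-module, and proving the clean statement that $M_i$ projective $\iff$ $S_i$ embeds in a projective-injective. The subtlety is that socle information is not directly transported by $\Hom_A(M,-)$ (which is exact only on the relevant resolutions), so I would instead route the argument through the characterization of projective-injective $\Lambda$-modules as $\Hom_A(M,J)$ for $J$ ranging over injective $A$-modules in $\mathcal{C}$, and match socles by a direct computation of $\Hom_\Lambda$ into simples. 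Once that correspondence is pinned down, both inclusions follow formally from Theorem 3.1 and Lemma 2.2.
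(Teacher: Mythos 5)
Your outline is essentially correct, and the structural fact you isolate as the crux --- that $S_i$ embeds into a projective--injective $\Lambda$-module if and only if $M_i$ is projective over $A$ --- is true; but your route is genuinely different from the paper's, and noticeably heavier. The paper does not use Theorem 3.1 here at all and never leaves the $\Lambda$-side. For the forward direction it observes that a projective--injective $I$ satisfies ${\soc}I\in\mathrm{add}({\soc}\Lambda)$, and bounds ${\Gpd}({\soc}\Lambda)\leq n$ directly from the short exact sequence $0\rightarrow{\soc}\Lambda\rightarrow\Lambda\rightarrow\Lambda/{\soc}\Lambda\rightarrow0$ via Lemma 2.2, using only that every module over an $(n+1)$-Gorenstein algebra has Gorenstein projective dimension at most $n+1$. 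For the converse it takes $I$ indecomposable injective with ${\Gpd}({\soc}I)=s\leq n$, gets ${\Ext}^{s}_{\Lambda}({\soc}I,\Lambda)\neq0$ from Lemma 2.1, identifies this group with ${\Hom}_{\Lambda}({\soc}I,I_{s})$ using the minimal injective coresolution of $\Lambda$ (legitimate because ${\soc}I$ is semisimple), and concludes ${\soc}I\subseteq{\soc}I_{s}$, so that $I$ is a direct summand of $I_{s}$, which is projective since $s\leq n<{\dom}\,\Lambda$. This is shorter and, notably, is exactly the argument that can be run backwards to give the characterisation in Theorem 4.1; your version, which consumes Theorem 3.1 and the equivalence with $\mathcal{Z}(\mathcal{C})$, would not specialise to that converse.

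The one point you should not leave as ``bookkeeping'' is the socle identification itself, since as stated it carries the entire content of both inclusions. It is provable along the lines you suggest: ${\Hom}_{\Lambda}(S_{j},{\Hom}_{A}(M,J))$ identifies with the maps $M_{j}\rightarrow J$ annihilating the trace in $M_{j}$ of all radical maps $M_{k}\rightarrow M_{j}$; when $M_{j}$ is non-projective its projective cover is such a radical map and is surjective, so this Hom-space vanishes and $S_{j}$ embeds into no ${\Hom}_{A}(M,J)$, while for $M_{j}$ projective the trace equals ${\rad}M_{j}$ and one obtains ${\Hom}_{A}(M_{j}/{\rad}M_{j},J)$, which is nonzero exactly when $J$ is the injective envelope of the top of $M_{j}$. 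With that lemma supplied, both of your inclusions do follow formally from Theorem 3.1 together with the fact that the Gorenstein projective dimension of a finite direct sum is the maximum of those of its summands (Lemma 2.1). Without it, the proposal is a plan rather than a proof.
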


\begin{proof}

Consider the short exact sequence $0 \rightarrow {\soc}\Lambda \rightarrow \Lambda \rightarrow \Lambda/{\soc}\Lambda\rightarrow 0.$ If $\Lambda/{\soc}\Lambda$ is a Gorenstein projective $\Lambda$-module, so is ${\soc}\Lambda$. Otherwise ${\Gpd(\soc}\Lambda)={\Gpd}(\Lambda/{\soc}\Lambda)-1\leq n$ since $\Lambda$ is an $(n+1)$-Gorenstein algebra.

If $I$ is a projective-injective $\Lambda$-module, then ${\soc}I\in$add$({\soc}\Lambda)$. Thus ${\Gpd(\soc}I)\leq n$.

Conversely, suppose $I$ is an indecomposable injective $\Lambda$-module with ${\Gpd(\soc}I)\leq n$. Since ${\id}_{\Lambda}\Lambda\leq n+1 \leq {\dom}\Lambda$, there exists a minimal injective coresolution of $\Lambda$
$$0 \rightarrow \Lambda \rightarrow I_{0}\rightarrow I_{1} \rightarrow \cdots \rightarrow I_{n+1}\rightarrow 0$$
with $I_{j}$ projective for all $0\leq j \leq n$. ${\Gpd(\soc}I)=s\leq n$ implies that ${\Ext}^{s}_{\Lambda}({\soc}I,\Lambda)\neq 0$. Because of ${\Ext}^{s}_{\Lambda}({\soc}I,\Lambda)={\Hom}_{\Lambda}({\soc}I, I_{s})$, we know that ${\soc}I$ is contained in ${\soc}I_{s}$. Thus $I$ is a direct summand of $I_{s}$ and $I$ is projective since $I_{s}$ is a projective $\Lambda$-module.

\end{proof}

Since the minimal n-Auslander-Gorenstein algebra $\Lambda$ is either a self-injective algebra or an (n+1)-Gorenstein algebra, the Gorenstein projective dimensions of $\Lambda$-modules are at most n+1. Now we show the relations between the Gorenstein projective dimensions of $\Lambda$-modules and their socles.

\begin{thm}

Let $\Lambda$ be a minimal n-Auslander-Gorenstein algebra and $N$ be a $\Lambda$-module. Then the following are equivalent:

$(1)$ The Gorenstein projective dimension of $N$ is at most n,

$(2)$ The Gorenstein projective dimension of {\soc}$(N)$ is at most n,

$(3)$ $N$ is cogenerated by a finite direct sum of $\Lambda$.

That is {\rm{GP}}$^{\leq n}(\Lambda)=\{N\in \Lambda\text{-}{\mo}\mid {\Gpd(\soc}N)\leq n\}={\rm{sub}}\Lambda$.

\end{thm}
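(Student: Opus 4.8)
The plan is to prove the three conditions equivalent by establishing the cyclic chain $(1)\Rightarrow(2)\Rightarrow(3)\Rightarrow(1)$. The two workhorses throughout will be the syzygy estimate of Lemma 2.2(2) and the universal bound $\Gpd X\leq n+1$ valid for every $\Lambda$-module $X$ (since $\Lambda$ is either self-injective or $(n+1)$-Gorenstein). Each of the three implications will be reduced to a single short exact sequence and one application of Lemma 2.2(2), with the sole exception of $(2)\Rightarrow(3)$, where the preceding theorem describing $\mathrm{prinj}(\Lambda)$ via socles carries the real weight.

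I would dispatch the two ``syzygy-shift'' implications $(1)\Rightarrow(2)$ and $(3)\Rightarrow(1)$ first, as they are the same computation. For $(1)\Rightarrow(2)$ I apply Lemma 2.2(2) to $0\rightarrow\soc N\rightarrow N\rightarrow N/\soc N\rightarrow 0$, getting $\Gpd(\soc N)\leq\max\{\Gpd N,\ \Gpd(N/\soc N)-1\}$; since $\Gpd N\leq n$ by hypothesis and $\Gpd(N/\soc N)\leq n+1$ by the universal bound, both entries are at most $n$. For $(3)\Rightarrow(1)$ I run the same estimate on a monomorphism $N\hookrightarrow\Lambda^{k}$: from $0\rightarrow N\rightarrow\Lambda^{k}\rightarrow\Lambda^{k}/N\rightarrow 0$ Lemma 2.2(2) gives $\Gpd N\leq\max\{\Gpd\Lambda^{k},\ \Gpd(\Lambda^{k}/N)-1\}=\max\{0,\ \Gpd(\Lambda^{k}/N)-1\}\leq n$, again by the bound $n+1$.

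The heart of the argument is $(2)\Rightarrow(3)$. Here I would pass to the injective envelope $I(N)$. Because $N\hookrightarrow I(N)$ is essential one has $\soc I(N)=\soc N$, so the hypothesis gives $\Gpd(\soc I(N))\leq n$. Applying the preceding theorem (the characterisation of $\mathrm{prinj}(\Lambda)$) to the injective module $I(N)$ shows $I(N)\in\mathrm{prinj}(\Lambda)$; in particular $I(N)$ is projective and hence a direct summand of a finite free module $\Lambda^{k}$. Composing $N\hookrightarrow I(N)\hookrightarrow\Lambda^{k}$ then exhibits $N$ as a submodule of a finite direct sum of copies of $\Lambda$, i.e. $N\in\mathrm{sub}\Lambda$. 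The asserted set equalities are immediate once the equivalences are proved, since condition (1) defines $\mathrm{GP}^{\leq n}(\Lambda)$, condition (2) defines the middle set, and condition (3) defines $\mathrm{sub}\Lambda$.

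I expect the only genuine obstacle to lie in $(2)\Rightarrow(3)$, precisely at the step identifying $I(N)$ as projective; this is exactly where the previous theorem is indispensable, whereas all remaining steps are formal consequences of Lemma 2.2(2) and the Gorenstein bound. I would also note that no separate treatment of the self-injective case is required: there every module is Gorenstein projective and $\Lambda$ is an injective cogenerator, so all three conditions hold trivially and remain consistent with the cycle.
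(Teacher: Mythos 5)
Your proposal is correct and follows essentially the same route as the paper: the same cycle $(1)\Rightarrow(2)\Rightarrow(3)\Rightarrow(1)$, with Lemma 2.2(2) applied to $0\rightarrow\soc N\rightarrow N\rightarrow N/\soc N\rightarrow 0$ and to the embedding of $N$ into a projective module, and with Theorem 3.2 applied to the injective envelope $I(N)$ for the middle implication. Your uniform use of the syzygy estimate (rather than the paper's minor case split on whether the cokernel is Gorenstein projective) is a cosmetic difference only.
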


\begin{proof}

$(1)\Longrightarrow (2):$ Assume ${\Gpd}N\leq n$. Consider the short exact sequence $0 \rightarrow {\soc}N \rightarrow N\rightarrow N/{\soc}N \rightarrow 0.$ By Lemma 2.2,  we have ${\Gpd}({\soc}N)\leq {\rm max}\{{\Gpd}N, {\Gpd}(N/{\soc}N)-1\}\leq n$.

$(2)\Longrightarrow(3):$ If ${\Gpd}({\soc}N)\leq n$, we know that ${\Gpd}({\soc}I(N))\leq n$ where $I(N)$ is the injective envelope of $N$. By Theorem 3.2, $I(N)$ is projective. Thus $N$ is cogenerated by a projective $\Lambda$-module.

$(3)\Longrightarrow(1):$  $N$ is cogenerated by a projective $\Lambda$-module $P$ and we can get a short exact sequence $0 \rightarrow N \rightarrow P \rightarrow P/N \rightarrow 0$. If $P/N$ is a Gorenstein projective module, so is $N$. Otherwise ${\Gpd}N={\Gpd}(P/N)-1\leq n$.

\end{proof}

It follows that a $\Lambda$-module $N$ has the highest Gorenstein projective dimension n+1 if and only if its socle ${\soc}N$ has the highest Gorenstein projective dimension n+1. Thus the Gorenstein projective dimensions of $\Lambda$-modules with highest Gorenstein projective dimension are determined by the Gorenstein projective dimensions of their socles.

\section{Characterizations of minimal Auslander-Gorenstein algebras}

In \cite{PS}, Pressland and Sauter proved a new characterization of minimal n-Auslander-Gorenstein algebras by using shifted and coshifted tilting modules. Minimal 1-Auslander-Gorenstein algebras can be characterised by the existence of a special tilting-cotilting module or that the Gorenstein projective modules category is an abelian category, see \cite{NRTZ} and \cite{KF} for details. In this section, we prove that minimal n-Auslander-Gorenstein algebras can be characterised by the relations between the Gorenstein projective dimensions of modules and their socles.

Our first characterisation of minimal n-Auslander-Gorenstein algebras is given in terms of projective-injective modules.

\begin{thm}

Let $\Lambda$ be a (n+1)-Gorenstein algebra. Then $\Lambda$ is a minimal n-Auslander-Gorenstein algebra if and only if it satisfies {\rm{prinj}}$(\Lambda)$=$\{I\in \Lambda{\text{-}\mo}\mid I$ is injective and ${\Gpd(\soc}I)\leq n\}$.

\end{thm}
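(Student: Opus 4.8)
The plan is to dispatch the two implications separately, noting that one direction is essentially already available. If $\Lambda$ is minimal $n$-Auslander-Gorenstein, then the asserted description of $\text{prinj}(\Lambda)$ is exactly the content of Theorem 3.2, so that direction requires nothing new. The real work is the converse: assuming $\Lambda$ is $(n+1)$-Gorenstein and that $\text{prinj}(\Lambda)=\{I \text{ injective}\mid \Gpd(\soc I)\leq n\}$, I must show $\dom \Lambda \geq n+1$, since $\id_{\Lambda}\Lambda \leq n+1$ already holds by the Gorenstein hypothesis. Equivalently, writing the minimal injective coresolution $0\to \Lambda \to I_{0}\to I_{1}\to \cdots \to I_{n+1}\to 0$, I must prove that $I_{0},\dots,I_{n}$ are all projective.

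I would prove this by induction on $k$, for $0\leq k \leq n$, showing that $I_{k}$ is projective. The crucial observation is that $I_{k}$ is the injective envelope of the $k$-th cosyzygy $\Omega^{-k}\Lambda$, so that $\soc I_{k}=\soc \Omega^{-k}\Lambda$; hence by the hypothesis it suffices to establish $\Gpd(\soc \Omega^{-k}\Lambda)\leq n$. Assuming inductively that $I_{0},\dots,I_{k-1}$ are projective, I would feed the short exact sequences $0\to \Omega^{-(j-1)}\Lambda \to I_{j-1}\to \Omega^{-j}\Lambda \to 0$ into Lemma 2.2(3); since each $I_{j-1}$ is projective, hence Gorenstein projective of dimension $0$, this yields $\Gpd(\Omega^{-k}\Lambda)\leq k \leq n$, where for $k=0$ the statement reduces to $\Gpd \Lambda = 0$.

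With the cosyzygy controlled, I would apply Lemma 2.2(2) to $0\to \soc \Omega^{-k}\Lambda \to \Omega^{-k}\Lambda \to (\Omega^{-k}\Lambda)/\soc \to 0$. Since $\Lambda$ is $(n+1)$-Gorenstein, every module has Gorenstein projective dimension at most $n+1$, so $\Gpd((\Omega^{-k}\Lambda)/\soc)-1 \leq n$, while $\Gpd \Omega^{-k}\Lambda \leq n$ from the previous step; therefore $\Gpd(\soc I_{k})=\Gpd(\soc \Omega^{-k}\Lambda)\leq n$. The hypothesis then forces $I_{k}$ to be projective, completing the induction. Consequently $I_{0},\dots,I_{n}$ are projective-injective, so $\dom \Lambda \geq n+1$, and together with $\id_{\Lambda}\Lambda \leq n+1$ this shows that $\Lambda$ is a minimal $n$-Auslander-Gorenstein algebra.

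I expect the main obstacle to be making the bootstrapping precise: the whole argument hinges on the fact that keeping the earlier injective terms projective prevents the Gorenstein projective dimension of the successive cosyzygies from reaching the maximal value $n+1$, which is exactly what keeps the socle bound at $n$ rather than $n+1$. Identifying $\soc I_{k}$ with $\soc \Omega^{-k}\Lambda$ and inserting the two short exact sequences into the correct parts of Lemma 2.2 is where care is required; the remainder is formal.
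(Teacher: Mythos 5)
Your proposal is correct and follows essentially the same route as the paper: both arguments walk along the minimal injective coresolution of $\Lambda$, bound $\Gpd$ of each cosyzygy $\im f_k = \Omega^{-k}\Lambda$ by $k$ once the earlier $I_j$ are known to be projective, then use Lemma 2.2 together with the $(n+1)$-Gorenstein bound to get $\Gpd(\soc I_k)\leq n$ and invoke the hypothesis to conclude $I_k$ is projective. Your explicit induction is just a formalization of the paper's ``continuing this procedure'' step, so there is nothing substantively different to compare.
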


\begin{proof}

If $\Lambda$ is a minimal n-Auslander-Gorenstein algebra, it satisfies {\rm{prinj}}$(\Lambda)$=$\{I\in \Lambda{\text{-}\mo}\mid I$ is injective and ${\Gpd(\soc}I)\leq n\}$ by Theorem 3.2.

Conversely, since $\Lambda$ is an (n+1)-Gorenstein algebra, it satisfies ${\id}_{\Lambda}\Lambda = n+1$. We only need to show ${\dom}\,\Lambda \geq n+1$. Consider the minimal injective coresolution of $\Lambda$
$$0 \rightarrow \Lambda \xrightarrow{f_{0}} I_{0} \xrightarrow{f_{1}} I_{1} \rightarrow \cdots \xrightarrow{f_{n}} I_{n} \xrightarrow{f_{n+1}} I_{n+1} \rightarrow 0.$$
Note that $I_{0}$ is the injective envelope of $\Lambda$ and ${\Gpd(\soc}\Lambda)\leq n$, we have ${\Gpd(\soc}I_{0})={\Gpd(\soc}\Lambda)\leq n$. Thus $I_{0}$ is a projective $\Lambda$-module by our assumption and then ${\Gpd(\im}f_{1})\leq 1$. Now consider the short exact sequence $$0 \rightarrow {\soc(\im}f_{1}) \rightarrow {\im}f_{1} \rightarrow {\im}f_{1}/{\soc(\im}f_{1}) \rightarrow 0.$$ According to Lemma 2.2, ${\Gpd(\soc(\im}f_{1}))\leq {\rm max}\{{\Gpd(\im}f_{1}),{\Gpd(\im}f_{1}/{\soc(\im}f_{1}))-1\}\leq n$. Because $I_{1}$ is the injective envelope of ${\im}f_{1}$, ${\Gpd(\soc}I_{1})\leq n$ and $I_{1}$ is projective. Then we have ${\Gpd(\im}f_{2})\leq 2.$ Continuing this procedure, we can get that $I_{j}$ is projective for $j=0,1,\ldots,n.$ Thus ${\dom}\,\Lambda \geq n+1$. This completes our proof.

\end{proof}

Recall that an Artin algebra $\Gamma$ is called an n-Auslander algebra if it satisfies ${\gl}\,\Gamma \leq n+1 \leq {\dom}\,\Gamma$. Here {\gl} denotes global dimension.

For algebras of finite global dimension, Gorenstein projective modules coincide with projective modules. Specialising Theorem 4.1 to algebras of finite global dimension, we obtain:

\begin{corollary}

Let $\Gamma$ be an Artin algebra with global dimension n+1. Then $\Gamma$ is an n-Auslander algebra if and only if it satisfies {\rm{prinj}}$(\Gamma)$=$\{I\in \Gamma{\text{-}\mo}\mid I$ is injective and ${\pd(\soc}I)\leq n\}$.

\end{corollary}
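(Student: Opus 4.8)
The plan is to deduce this directly from Theorem 4.1 by specialising to the finite global dimension setting, where Gorenstein projective dimension collapses to ordinary projective dimension. First I would observe that an Artin algebra $\Gamma$ with $\gl\,\Gamma = n+1$ is automatically an $(n+1)$-Gorenstein algebra: since the injective dimension of any module is bounded above by the global dimension, both $\id_{\Gamma}\Gamma$ and the injective dimension of the right regular module are at most $n+1$. Hence $\Gamma$ is $(n+1)$-Gorenstein and Theorem 4.1 applies to it.

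Next I would record the two translations that turn the statement of Theorem 4.1 into the statement of the corollary. On the one hand, for an algebra of finite global dimension every Gorenstein projective module is projective, so $\Gpd\,X = \pd\,X$ for every $\Gamma$-module $X$; in particular $\Gpd(\soc I) = \pd(\soc I)$, so the set $\{I \text{ injective} \mid \Gpd(\soc I)\leq n\}$ appearing in Theorem 4.1 coincides with $\{I \text{ injective} \mid \pd(\soc I)\leq n\}$. On the other hand, I would check that, under the standing hypothesis $\gl\,\Gamma = n+1$, being an $n$-Auslander algebra coincides with being a minimal $n$-Auslander-Gorenstein algebra: the former requires $\gl\,\Gamma \leq n+1 \leq \dom\,\Gamma$ and the latter requires $\id_{\Gamma}\Gamma \leq n+1 \leq \dom\,\Gamma$, but with $\gl\,\Gamma = n+1$ both left-hand inequalities hold automatically, so each condition reduces to $\dom\,\Gamma \geq n+1$.

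Combining these two translations, $\Gamma$ is an $n$-Auslander algebra if and only if it is a minimal $n$-Auslander-Gorenstein algebra, which by Theorem 4.1 holds if and only if $\mathrm{prinj}(\Gamma)=\{I \text{ injective} \mid \Gpd(\soc I)\leq n\}$, and this set in turn equals $\{I \text{ injective} \mid \pd(\soc I)\leq n\}$ by the first translation. This yields the claimed equivalence.

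Since every step is a direct substitution into Theorem 4.1, I do not expect a substantial obstacle; the only points requiring care are verifying that finite global dimension really does force the $(n+1)$-Gorenstein hypothesis of Theorem 4.1, and that $\Gpd = \pd$ for the socles involved. Both are standard facts in Gorenstein homological algebra, the latter being exactly the remark recorded just before the corollary.
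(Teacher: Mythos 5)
Your proposal is correct and follows essentially the same route as the paper, which obtains the corollary by specialising Theorem 4.1 after noting that Gorenstein projective modules coincide with projective modules (hence $\Gpd=\pd$) over algebras of finite global dimension. The two translation steps you spell out --- that $\gl\,\Gamma=n+1$ forces $\Gamma$ to be $(n+1)$-Gorenstein, and that the Auslander and minimal Auslander--Gorenstein conditions both reduce to ${\dom}\,\Gamma\geq n+1$ under this hypothesis --- are exactly the (implicit) content of the paper's one-line derivation.
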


Now we prove another characterization of minimal n-Auslander-Gorenstein algebras.

\begin{thm}

Let $\Lambda$ be an Artin algebra. Then $\Lambda$ is a minimal n-Auslander-Gorenstein algebra if and only if it satisfies {\rm{GP}}$^{\leq n}(\Lambda)=\{N\in \Lambda\text{-}{\mo}\mid {\Gpd(\soc}N)\leq n\}={\rm{sub}}\Lambda$.

\end{thm}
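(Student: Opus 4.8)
The plan is to treat the two implications separately, observing that one of them is already in hand. If $\Lambda$ is a minimal n-Auslander-Gorenstein algebra, then the chain of equalities ${\rm GP}^{\leq n}(\Lambda)=\{N\in\Lambda\text{-}{\mo}\mid{\Gpd}({\soc}N)\leq n\}={\rm sub}\Lambda$ is precisely the content of Theorem 3.3, so the forward direction needs no further argument.

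For the converse I assume the equalities and must verify ${\id}_{\Lambda}\Lambda\leq n+1\leq{\dom}\,\Lambda$. First I would bound the self-injective dimension. Given any module $N$, choose an epimorphism $P\to N$ from a projective module with kernel $\Omega N$; since $\Omega N$ is a submodule of $P$ it lies in ${\rm sub}\Lambda={\rm GP}^{\leq n}(\Lambda)$, so ${\Gpd}(\Omega N)\leq n$, and Lemma 2.2(3) applied to $0\to\Omega N\to P\to N\to 0$ gives ${\Gpd}N\leq n+1$. Hence every $\Lambda$-module has Gorenstein projective dimension at most $n+1$; by Lemma 2.1 this forces ${\Ext}^{i}_{\Lambda}(N,\Lambda)=0$ for all $i\geq n+2$ and all $N$, computing injective dimension via $\Ext$ into $\Lambda$ this is exactly ${\id}_{\Lambda}\Lambda\leq n+1$.

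The second ingredient is the remark that, under the hypothesis, an injective module $I$ with ${\Gpd}({\soc}I)\leq n$ must be projective: such an $I$ lies in $\{N\mid{\Gpd}({\soc}N)\leq n\}={\rm sub}\Lambda$, hence embeds into a finite direct sum of copies of $\Lambda$, and since $I$ is injective this embedding splits, so $I$ is a direct summand of a projective module. Equivalently this records that ${\rm prinj}(\Lambda)=\{I\in\Lambda\text{-}{\mo}\mid I\text{ is injective and }{\Gpd}({\soc}I)\leq n\}$, so having already shown ${\id}_{\Lambda}\Lambda\leq n+1$ I could simply invoke Theorem 4.1. If one prefers a self-contained argument, take the minimal injective coresolution $0\to\Lambda\xrightarrow{f_{0}}I_{0}\xrightarrow{f_{1}}I_{1}\to\cdots\to I_{n+1}\to 0$ and prove by induction that $I_{0},\dots,I_{n}$ are projective: from ${\soc}\Lambda\hookrightarrow\Lambda$ one gets ${\Gpd}({\soc}I_{0})={\Gpd}({\soc}\Lambda)\leq n$ (as $I_0$ is the injective envelope of $\Lambda$), so $I_{0}$ is projective; then ${\Gpd}({\im}f_{1})\leq 1$, and applying Lemma 2.2(2) to $0\to{\soc}({\im}f_{1})\to{\im}f_{1}\to{\im}f_{1}/{\soc}({\im}f_{1})\to 0$ together with the global bound ${\Gpd}(-)\leq n+1$ yields ${\Gpd}({\soc}I_{1})={\Gpd}({\soc}({\im}f_{1}))\leq n$, whence $I_{1}$ is projective; iterating produces $I_{0},\dots,I_{n}$ projective, i.e. ${\dom}\,\Lambda\geq n+1$.

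I expect the subtle point to be the self-injective dimension bound and the care it demands: the hypothesis only constrains left modules, so I would deduce only ${\id}_{\Lambda}\Lambda\leq n+1$ and then verify the (left-handed) definition ${\id}_{\Lambda}\Lambda\leq n+1\leq{\dom}\,\Lambda$ directly, rather than asserting a two-sided $(n+1)$-Gorenstein property that the one-sided data does not obviously supply. A minor matter is the degenerate self-injective case, where ${\id}_{\Lambda}\Lambda=0$ and ${\dom}\,\Lambda=\infty$ so the definition holds automatically, together with the small-$n$ bookkeeping in the induction (the estimate ${\Gpd}({\soc}({\im}f_{j}))\leq\max\{j,n\}=n$ for $0\leq j\leq n$).
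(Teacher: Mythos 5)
Your proposal is correct and follows essentially the same route as the paper: the forward direction is Theorem 3.3, the bound ${\id}_{\Lambda}\Lambda\leq n+1$ comes from syzygies lying in ${\rm sub}\Lambda={\rm GP}^{\leq n}(\Lambda)$, and the dominant dimension bound comes from the same induction along the minimal injective coresolution (the paper phrases the key step as "injective and in ${\rm sub}\Lambda$ implies projective", exactly as you do). Your extra care about the one-sided versus two-sided Gorenstein condition and the short-coresolution case is a reasonable refinement of what the paper states more briskly.
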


\begin{proof}

If $\Lambda$ is a minimal n-Auslander-Gorenstein algebra, it satisfies {\rm{GP}}$^{\leq n}(\Lambda)=\{N\in \Lambda\text{-}{\mo}\mid {\Gpd(\soc}N)\leq n\}={\rm{sub}}\Lambda$ by Theorem 3.3.

Conversely, let $X$ be a $\Lambda$-module and $P(X)$ be its projective cover. Then we obtain a short exact sequence $0 \rightarrow Y \rightarrow P(X) \rightarrow X \rightarrow 0.$ Since $Y \in {\rm sub}\Lambda$, by assumption, we have ${\Gpd}Y \leq n$ and thus ${\Gpd}X \leq n+1$. This implies that $\Lambda$ is an m-Gorenstein algebra with ${\id}_{\Lambda}\Lambda= m \leq n+1$.

First assume ${\id}_{\Lambda}\Lambda < n+1$. Let $I$ be an indecomposable injective $\Lambda$-module, we know that ${\Gpd}I\leq {\id}_{\Lambda}\Lambda \leq n$. Then $I \in {\rm sub}\Lambda$ and $I$ is projective. Thus $\Lambda$ is a self-injective algebra.

If ${\id}_{\Lambda}\Lambda = n+1$, there exists a minimal injective coresolution of $\Lambda$ $$0 \rightarrow \Lambda \xrightarrow{f_{0}} I_{0} \xrightarrow{f_{1}} I_{1} \rightarrow \cdots \xrightarrow{f_{n}} I_{n} \xrightarrow{f_{n+1}} I_{n+1} \rightarrow 0.$$

By assumption, $\Lambda \in {\rm sub}\Lambda$ implies ${\Gpd(\soc}\Lambda)\leq n$. Since $I_{0}$ is the injective envelope of $\Lambda$, we have ${\Gpd(\soc}I_{0})\leq n$. By assumption again, $I_{0}\in {\rm sub}\Lambda$ and thus $I_{0}$ is a projective $\Lambda$-module. Then we get ${\Gpd(\im}f_{1})\leq 1$. So ${\im}f_{1}\in {\rm GP}^{\leq n}(\Lambda)$ and ${\Gpd(\soc(\im}f_{1}))\leq n$. Note that $I_{1}$ is the injective envelope of ${\im}f_{1}$, we have $I_{1}$ is projective and ${\Gpd(\im}f_{2})\leq 2$. Continuing this procedure, we get all $I_{j} (0 \leq j \leq n)$ are projective and thus ${\dom}\,\Lambda \geq n+1$. It follows that $\Lambda$ is a minimal n-Auslander-Gorenstein algebra.

\end{proof}

Immediately, we have the following corollary.

\begin{corollary}

Let $\Gamma$ be an Artin algebra and ${\rm P}^{\leq n}(\Gamma)$ be the subcategory of $\Gamma$-modules whose projective dimensions are at most n. Then $\Gamma$ is an n-Auslander algebra if and only if it satisfies {\rm{P}}$^{\leq n}(\Gamma)=\{N\in \Gamma\text{-}{\mo}\mid {\pd(\soc}N)\leq n\}={\rm{sub}}\Gamma$.

\end{corollary}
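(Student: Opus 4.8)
The plan is to read this off Theorem 4.3 after specialising to the regime where Gorenstein homological algebra collapses to ordinary homological algebra. The one external input I will use is the standard fact, recorded just before the statement, that over an Artin algebra of finite global dimension the Gorenstein projective modules are precisely the projective modules, so that $\Gpd M=\pd M$ for every module $M$; in particular $\mathrm{GP}^{\leq n}(\Gamma)=\mathrm{P}^{\leq n}(\Gamma)$ and $\{N\mid \Gpd(\soc N)\leq n\}=\{N\mid \pd(\soc N)\leq n\}$ whenever $\gl\,\Gamma<\infty$. With this dictionary the assertion of the corollary becomes literally the assertion of Theorem 4.3, so the whole point is to control the global dimension on both sides.

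For the forward implication I would take an $n$-Auslander algebra $\Gamma$, so $\gl\,\Gamma\leq n+1\leq\dom\,\Gamma$. Since $\id_{\Gamma}\Gamma\leq\gl\,\Gamma\leq n+1$, the algebra $\Gamma$ is in particular a minimal $n$-Auslander-Gorenstein algebra, and Theorem 4.3 gives $\mathrm{GP}^{\leq n}(\Gamma)=\{N\mid \Gpd(\soc N)\leq n\}=\mathrm{sub}\Gamma$. As $\gl\,\Gamma\leq n+1<\infty$, the dictionary above rewrites this equality as $\mathrm{P}^{\leq n}(\Gamma)=\{N\mid \pd(\soc N)\leq n\}=\mathrm{sub}\Gamma$, which is exactly what is claimed.

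For the converse I would first recover finiteness of the global dimension purely from the hypothesis. The equality forces $\mathrm{sub}\Gamma\subseteq \mathrm{P}^{\leq n}(\Gamma)$, i.e. every submodule of a projective module has projective dimension at most $n$; feeding the syzygy of a projective cover $0\to \Omega X\to P(X)\to X\to 0$ into this shows $\pd X\leq n+1$ for every $X$, whence $\gl\,\Gamma\leq n+1<\infty$. Now the dictionary applies, so the hypothesised equality coincides with the equality $\mathrm{GP}^{\leq n}(\Gamma)=\{N\mid \Gpd(\soc N)\leq n\}=\mathrm{sub}\Gamma$ of Theorem 4.3; that theorem then certifies $\Gamma$ as a minimal $n$-Auslander-Gorenstein algebra, so $\dom\,\Gamma\geq n+1$. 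Combining this with the bound $\gl\,\Gamma\leq n+1$ already obtained yields $\gl\,\Gamma\leq n+1\leq\dom\,\Gamma$, i.e. $\Gamma$ is an $n$-Auslander algebra.

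The only step that requires any thought is the extraction of finite global dimension in the converse direction, which is precisely what permits Theorem 4.3 to be quoted verbatim; everything else is the bookkeeping of replacing $\Gpd$ by $\pd$. I expect no further obstacle, which is consistent with the corollary being flagged as immediate.
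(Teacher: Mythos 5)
Your proof is correct and is essentially the paper's intended argument: the paper derives this corollary from Theorem 4.3 by noting that over an algebra of finite global dimension Gorenstein projective dimension coincides with projective dimension. Your extra step in the converse --- extracting $\gl\,\Gamma\leq n+1$ from ${\rm sub}\Gamma\subseteq {\rm P}^{\leq n}(\Gamma)$ before invoking this identification --- is exactly the detail needed to make the ``immediate'' deduction rigorous, and it mirrors the syzygy argument already used in the proof of Theorem 4.3 itself.
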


Our results in this section generalise the main theorems in \cite{LZ}, where the theorems were proved for the special case of Auslander algebras.

We give two counterexamples to questions that might arise that we found using the GAP-package QPA, see \cite{QPA}. We remark that we use right modules in those counterexamples since QPA always uses right modules. Both examples are Nakayama algebras. We refer to \cite{ARS} and \cite{Mar3} for the basics and homological algebra of Nakayama algebras. One might ask whether in a minimal n-Auslander-Gorenstein algebra of infinite global dimension, a module $N$ has projective dimension n+1 if and only if its socle has projective dimension n+1. The following example shows that this does not hold in general.

\begin{example} \rm

Let $\Lambda$ be the Nakayama algebra with Kupisch series [3, 3, 4] and simple modules numbered from 1 to 3. Then $\Lambda$ is a minimal 1-Auslander-Gorenstein algebra with infinite global dimension. The right module $N:=  e_{1} \Lambda /e_{1} J^2$ has projective dimension 2, but its socle has infinite projective dimension.

\end{example}

One might ask whether in a higher Auslander algebra it holds that a module $N$ has projective dimension 2 if and only if its socle has projective dimension 2, as a generalisation of Theorem B in \cite{LZ} from Auslander algebras to higher Auslander algebras. The next example shows that this is not true in general.

\begin{example} \rm

Let $\Gamma$ be the Nakayama algebra with Kupisch series [3, 3, 3, 3, 2, 1] and simple modules numbered from 1 to 6. Then $\Gamma$ is a 2-Auslander algebra. Let $N$ be the right module $e_{3} \Gamma /e_{3} J^2$. Then $N$ has projective dimension 2, but its socle has projective dimension 1.

\end{example}

\section*{Acknowledgements}
We are thankful to Steffen Koenig for useful comments.
This paper was written when the first author was visiting University of Stuttgart from October 2017 to September 2018. He would like to thank Prof. Steffen Koenig and the rest of the IAZ for their warm hospitality and kind help. This work is supported by the National Natural Science Foundation of China (11671230, 11601274, 11371165).

\newpage

Shen Li: School of Mathematics, Shandong University, PR China

\emph{E-mail address}: fbljs603@163.com

Ren{\'{e}} Marczinzik: Institute of algebra and number theory, University of Stuttgart, Germany

\emph{E-mail address}: marczire@mathematik.uni-stuttgart.de

Shunhua Zhang: School of Mathematics, Shandong University, PR China

\emph{E-mail address}: shzhang@sdu.edu.cn

\end{document}